\newcommand{\bigo}[1]{\mathcal{O}\left( #1 \right) }
\newcommand{\no}{\noindent}
\newtheorem{thm}{Theorem}
\newtheorem{lemma}{Lemma}
\newtheorem{cor}{Corollary}
\newtheorem{define}{Definition}
\newcommand{\OO}{\mathcal{O}}
\theoremstyle{remark}
\newtheorem{rem}{Remark}
\def\l{\left}
\def\r{\right}
\def\R#1{$(\ref{#1})$}
\newcommand{\bb}[1]{\begin{equation}\label{#1}}
\newcommand{\ee}{\end{equation}}
\newcommand{\bbb}{\begin{eqnarray}}
\newcommand{\eee}{\end{eqnarray}}
\newcommand{\bbbb}{\begin{eqnarray*}}
\newcommand{\eeee}{\end{eqnarray*}}
\newcommand{\nnn}{\nonumber}
\newcommand{\x}{\textbf{x}}
\newcommand{\w}{\textbf{w}}
\newcommand{\vv}{\textbf{v}}
\newcommand{\f}{\textbf{f}}
\newcommand{\z}{\textbf{z}}
\newcommand{\xx}{\boldsymbol{\xi}}
\newcommand{\e}{\boldsymbol{\varepsilon}}
\newcommand{\RR}{\mathbb{R}}
\newcommand{\clearallnum}{
    \numberwithin{equation}{section} \setcounter{equation}{0}
    \numberwithin{thm}{section} \setcounter{thm}{0}
    \numberwithin{lemma}{section} \setcounter{lemma}{0}
    \numberwithin{cor}{section} \setcounter{cor}{0}
    \numberwithin{rem}{section} \setcounter{rem}{0}
    \numberwithin{define}{section} \setcounter{define}{0}}
\begin{document}

%% Title, authors and addresses

%% use the tnoteref command within \title for footnotes;
%% use the tnotetext command for the associated footnote;
%% use the fnref command within \author or \address for footnotes;
%% use the fntext command for the associated footnote;
%% use the corref command within \author for corresponding author footnotes;
%% use the cortext command for the associated footnote;
%% use the ead command for the email address,
%% and the form \ead[url] for the home page:
%%
%% \title{Title\tnoteref{label1}}
%% \tnotetext[label1]{}
%% \author{Name\corref{cor1}\fnref{label2}}
%% \ead{email address}
%% \ead[url]{home page}
%% \fntext[label2]{}
%% \cortext[cor1]{}
%% \address{Address\fnref{label3}}
%% \fntext[label3]{}
\begin{frontmatter}

\title{A Nonlinear Splitting Algorithm for Systems of Partial Differential Equations with self-Diffusion}
\author{Matthew A. Beauregard$^{a,}$\footnote{This author was supported in part by internal research grant (No. 107552-26423-150) from Stephen F. Austin State University.}, Joshua Padgett$^b$, and Rana Parshad$^{c}$}
\address{$^a$Department of Mathematics and Statistics, Stephen F. Austin State University, Nacogdoches, TX, 75962
\\
$^b$Department of Mathematics, Baylor University, Waco, TX, 76798
\\
$^c$Department of Mathematics, Clarkson University, Potsdam, NY, 13699}

\begin{abstract}
Systems of reaction-diffusion equations are commonly used in biological models of food chains.  The populations and their complicated interactions present numerous challenges in theory and in numerical approximation.  In particular, self-diffusion is a nonlinear term that models overcrowding of a particular species.  The nonlinearity complicates attempts to construct efficient and accurate numerical approximations of the underlying systems of equations.  In this paper, a new nonlinear splitting algorithm is designed for a partial differential equation that incorporates self-diffusion. We present a general model that incorporates self-diffusion and develop a numerical approximation. The numerical analysis of the approximation provides criteria for stability and convergence. Numerical examples are used to illustrate the theoretical results.
\end{abstract}

\begin{keyword}
reaction-diffusion equations \sep nonlinear splitting \sep self-diffusion \sep overcrowding \sep food-chain model
\end{keyword}
\end{frontmatter}

% AMS Code - 65NXX

\bibliographystyle{abbrv}

\section{Introduction} \clearallnum

This paper is motivated by a three-species food chain model first developed in \cite{U97} and analyzed in \cite{AA02}. Recently, this model was improved to consider overcrowding effects of the population species in \cite{BeauPar2015}.  Our goal of this paper is to develop reliable, accurate, efficient, and valid numerical approximations that incorporate the nonlinear overcrowding term for the top predator.

Consider an invasive species $r$ that has invaded a certain two dimensional habitat.  Let $r$ predate on a middle predator $v$, which in turn predates on a prey $u$. A partial differential equation that includes overcrowding is,
\begin{eqnarray}
\label{eq:(1.1)}
\partial _{t}r &=& d_3 \Delta r + d_4 \Delta r^2  +cr^{2}-w _{3}\frac{r^{2}}{v+D_{3}} \equiv d_3 \Delta r + d_4 \Delta r^2 + h(u,v,r),\\
\label{eq:(1.2)}
\partial _{t}v &=& d_{2}\Delta v-a_{2}v+w_{1}\frac{uv}{u+D_{1}}-w_{2}\frac{vr}{v+D_{2}} \equiv d_{2}\Delta v + g(u,v,r),\\
\label{eq:(1.3)}
\partial _{t}u &=& d_{1}\Delta u+a_{1}u-b_{2}u^{2}-w_{0}\frac{uv}{u+D_{0}}\equiv d_{1}\Delta u + f(u,v,r),
\end{eqnarray}
\noindent defined on $\mathbb{R}^{+}\times \Omega$. Here $\Omega \subset \mathbb{R}^{2}$ and $\Delta$ is the two dimensional Laplacian operator.  We define $\x$ to be the spatial coordinate vector in two dimensions. The parameters $d_{1}$, $d_{2}$ and $d_{3}$ are positive diffusion coefficients.
The initial populations are given as
\[ u(0,\x)=u_{0}(\x),~~ v(0,\x)=v_{0}(\x),~~ r(0,\x)=r_{0}(\x) ~~~~~ \x \in \Omega, \]
\noindent are assumed to be nonnegative and uniformly bounded on $\Omega$. Appropriate boundary conditions are specified.  Here, we examine Dirichlet boundary conditions, however our analysis extends to the Neumann boundary condition case in a straight forward manner.
The parameter definitions are given in Table \ref{tab:table1}:
\begin{table}[h]
    \begin{tabular}{ll}\hline
      Symbols & Meaning \\ \hline
$u$                      & Prey\\
$v$                      & Middle Predator\\
$r$                       & Top Predator \\
$a_1$                  & Growth rate of prey $u$ \\
$a_2$                  & Measures the rate at which $v$ dies out when there is no $u$ to prey on and no $r$\\
$w_{i}'s$              & Maximum value that the per-capita rate can attain\\
$D_0$, $D_1$     & Measure the level of protection provided by the environment to the prey\\
 $b_{2}$               & Measure of the competition among prey, $u$\\
 $D_2$                 & Value of $v$ at which its per capita removal rate becomes ${w_2}/2$\\
 $D_3$                 & Loss in $r$ due to the lack of its favorite food, $v$\\
 $c$                   & Growth rate of $r$ via sexual reproduction\\
 $d_4$                 & The strength of the overcrowding term \\
 \hline
    \end{tabular}
    \caption{List of parameters used in the three species food chain model. All these parameters are positive constants.}
    \label{tab:table1}
\end{table}

This model is rich in dynamics and stems from the Leslie-Gower formulation \cite{L48}, that is, the middle predator is depredated at a Holling type II rate, and the generalist top predator grows logistically as $cr$, and loses due to intraspecies competition as $-w_{3}r^{2}/(v+D_{3})$. The literature is abundant with investigations of variants to this model [2,~3,~7,~10,~12-17,~19,~21]. % \citep{AD03,MB12,G05,N13,L02,p14,P10,PK13,PK14,U5,UIR00,U7}. 
However, the development and analysis of accurate numerical approximations has not been considered, especially in situations involving the overcrowding term.  The overcrowding term can be viewed as a severe penalty to crowding in the top predator forcing a strong movement to lower concentrations of $r$.

While the above model motivates this paper we develop a nonlinear algorithm for
\bb{model}
u_t = \Delta\left( u + u^2 \right) + f(u,\x,t),
\ee
where $\Delta$ is the standard $2-$dimensional Laplacian, $f(u,\x,t)$ is a nonlinear reactive term, and appropriate initial and boundary conditions are given for $u(\x,t)$.

This paper is organized as follows. In section 2 we present the nonlinear variable time splitting model which is based on a modified Douglass-Gunn splitting method.  Section 3 details our numerical analysis of the proposed algorithm.  It is shown that the method is stable and second-order convergent in time under reasonable criteria for the temporal and spatial sizes.  Section 4 contains examples that illustrate our theoretical results and explores the dynamics of \R{eq:(1.1)}-\R{eq:(1.3)}, in particular the effect of the self-diffusion and overcrowding on the numerical solution. Section 5 summarizes our key results.

In the ensuing discussion all lowercase bold letters indicate vectors,
uppercase letters are used for matrices.  The $\ell^2$-norm is used
throughout discussions unless otherwise specified. That is, given a vector $\x \in \mathbb{R}^n$, then
\[ \| \x \| = \sqrt{\sum_{i=1}^n |x_i|^2}. \]%\sup_{i=1,\ldots,n} | x_i |. \]
The matrix norms considered will be the spectral norm, which is induced by the above vector norm. %{\red (I imagine this is completely unnecessary/assumed. JLP)}

We define a scheme as \textit{computationally efficient} if it is \textit{second order accurate} in space and time or better
%unconditionally stable in the linearized model,
and the \textit{number of operations per time step is directly proportional to the number of unknowns}.

\section{Nonlinear Model}

\no We consider the following model
\begin{eqnarray}\label{nonlinearmodel}
u_t = \Delta\left( u + u^2 \right) + f(u,\x,t),
\end{eqnarray}
where $\Delta$ is, in this case, the $2-$dimensional Laplacian, $\x = (x,y)$, and appropriate initial conditions are given. Dirichlet boundary conditions are assumed. Without loss of generality, we assume a square domain $\Omega = (0,1)\times (0,1)$ in the following discussions.

Given $N\gg 0,$ we may inscribe over $\Omega$ the mesh $\mathcal{D}_h = \{(x_i,y_j)~|~i,j=0,1,\dots,N+1\},$ where $h=1/(N+1)$ and $x_i=ih$ and $y_i=jh$ for $i,j=0,1,\dots,N+1.$ Further, we define $u_{i,j}(t)$ as the approximation to the exact solution $u(x_i, y_j, t)$ and let $\vv=(u_{1,1}(t), u_{2,1}(t), \ldots, u_{N,1}(t), \ldots, u_{N,N}(t))^{\top}.$ Similarly, let $\f=(f(u(x_1,y_1,t),x_1,y_1,t), f(u(x_2,y_2,t),x_2,y_1,t), \ldots, f(u(x_N,y_N,t),x_N, y_N, t))^{\top}$.  We propose the following semidiscretized scheme to approximate \R{nonlinearmodel},

\[ \frac{d\vv}{dt} = \left(P + R + P D(\vv) + R D(\vv)\right) \vv + \f, \]

\noindent where $P$, $R$, and $D(\vv)$ are $N^2 \times N^2$ matrices defined as $P = I_N \otimes T,$ $R = T \otimes I_N$, and $D(\vv) = \mbox{diag}(\vv) =  \mbox{diag}(u_{1,1}(t),\dots,u_{N,N}(t)).$ $I_N$ is the $N\times N$ identity matrix and $T$ is the symmetric, tridiagonal $N\times N$ matrix with $1/h^2$ as the lower and upper diagonals and $-2/h^2$ as the main diagonal. Here, we have used second-order finite differences to approximate the spatial derivatives, however other suitable approximates may be incorporated.  However, the theoretical requirements that will ensure stability of our algorithm will need to be resolved to reflect a different spatial discretization. Nevertheless, these changes in the spatial discretization will not affect the temporal advancement developed in this paper. A variable time-step second order Crank-Nicolson method is used to advance the solution in time, namely,
\begin{eqnarray*}
 \left(I - \frac{\tau_k}{2}\left( P + R + P D_{k+1} + R D_{n+1} \right)\right) \vv_{k+1} &=&  \left(I + \frac{\tau_k}{2} \left( P + R + P D_k + R D_k \right)\right) \vv_k \\
 &&~+ \frac{\tau_k}{2} \left( \f_{k+1} + \f_k \right) + \OO\l(\tau_k^3\r),
 \end{eqnarray*}
where $\tau_k$ is the variable temporal step, $D_k = D(\vv_k),$ and $\f_k$ is the vector $\f$ evaluated at time $t_k,$ where $t_k=\sum_{i=0}^{k-1}\tau_i.$ The above can then be factored as,
\begin{eqnarray}
&&\left(I - \frac{\tau_k}{2} P \right)\left(I - \frac{\tau_k}{2} R \right)\left(I - \frac{\tau_k}{2} P D_{k+1} \right)\left(I - \frac{\tau_k}{2} R D_{k+1} \right)\vv_{k+1}\nnn\\
&&~~~ = \left(I + \frac{\tau_k}{2} P \right)\left(I + \frac{\tau_k}{2} R \right)\left(I + \frac{\tau_k}{2} P D_{k} \right)\left(I + \frac{\tau_k}{2} R D_{k} \right)\vv_{k}+\frac{\tau_k}{2}(\f_{k+1}+\f_k)+\OO\l(\tau^3_k\r).~~\label{factorednonlinear}
\end{eqnarray}
This is to be solved using our modified variable time Douglass-Gunn splitting method, namely,
\begin{subequations}
\begin{eqnarray}
\label{StepOne}   \l(I - \frac{\tau_k}{2} P\r)\tilde{\vv}^{(1)} &=& \left(I + \frac{\tau_k}{2} \left(P + 2R + 2PD_k + 2RD_k\right)\right) \vv_k + \tau_k \f_k, \\%{\red + \frac{\tau_k}{2}(f_{k+1}+f_k)}, \\
\label{StepTwo}   \l(I - \frac{\tau_k}{2} R\r)\tilde{\vv}^{(2)} &=& \tilde{\vv}^{(1)} - \frac{\tau_k}{2} R \vv_k,\\
\label{StepThree} \l(I - \frac{\tau_k}{2} PD_{k+1}\r)\tilde{\vv}^{(3)} &=& \tilde{\vv}^{(2)} - \frac{\tau_k}{2} P D_k \vv_k,\\
\label{StepFour}  \l(I - \frac{\tau_k}{2} RD_{k+1}\r)\vv_{k+1} &=& \tilde{\vv}^{(3)} - \frac{\tau_k}{2} R D_k \vv_k + \frac{\tau_k}{2}\left( \f_{k+1} - \f_k\right)
\end{eqnarray}
\end{subequations}
The third and fourth steps involve the implicit term $D_{k+1}$. This can be approximated by taking an Euler step, that is, $D_{k+1} = \mbox{diag}(\vv_{k+1}) = D_k + \tau_k~ \mbox{diag}\l( (P+R+PD_k + RD_k) \vv_k + \f_k\r) + \OO\l(\tau_k^2\r).$ Furthermore, an Euler step is used to evaluate $\f_{k+1}$.  This approximation maintains the second order accuracy of the splitting method.  As will be shown in the following section, the variable time step is necessary to ensure stability of the method.

This splitting method can be shown to be equivalent to \R{factorednonlinear}.  To see this, first multiply \R{StepFour} by
$$\l(1-\frac{\tau_k}{2} P\r)\l(1-\frac{\tau_k}{2} R\r)\l(1-\frac{\tau_k}{2} PD_{k+1}\r)$$
to obtain,
\begin{eqnarray*}
&& \left(I - \frac{\tau_k}{2} P \right)\left(I - \frac{\tau_k}{2} R \right)\left(I - \frac{\tau_k}{2} P D_{k+1} \right)\left(I - \frac{\tau_k}{2} R D_{k+1} \right)\vv_{k+1} \\
&&~~~ = \left(I - \frac{\tau_k}{2} P \right)\left(I - \frac{\tau_k}{2} R \right)\left(I - \frac{\tau_k}{2} P D_{k+1} \right)\left(\tilde{\vv}^{(3)} - \frac{\tau_k}{2} R D_k \vv_k + \frac{\tau_k}{2}\left( \f_{k+1} - \f_k\right)\right) \\
&&~~~ = \left[ 1 + \frac{\tau_k}{2}(P + R + P D_k + R D_k)  + \frac{\tau_k^2}{4} (PR + PPD_k + R PD_k + P RD_k + R R D_k + P D_{k+1} R D_k)\right] \vv_k~~~~~~\\
&&~~~~~~~~~~+\frac{\tau_k}{2}\left(\f_k+\f_{k+1}\right) -\frac{\tau_k^2}{2}\left(P+R+PD_{k+1}\right)\left( \f_{k+1} - \f_k\right) + \bigo{\tau_k^3}\\
&&~~~ = \left[ 1 + \frac{\tau_k}{2}(P + R + P D_k + R D_k) + \frac{\tau_k^2}{4} (PR + PPD_k + R PD_k + P RD_k + R R D_k + P D_{k} R D_k)\right] \vv_k \\
&&~~~~~~~~~~ \frac{\tau_k}{2}\left(\f_k+\f_{k+1}\right) + \bigo{\tau_k^3}\\
&&~~~ = \left(I + \frac{\tau_k}{2} P \right)\left(I + \frac{\tau_k}{2} R \right)\left(I + \frac{\tau_k}{2} P D_{k} \right)\left(I + \frac{\tau_k}{2} R D_{k} \right)\vv_{k} + \frac{\tau_k}{2}\left(\f_k+\f_{k+1}\right) + \bigo{\tau_k^3}.
\end{eqnarray*}

\section{Numerical Analysis} \clearallnum

The numerical solution of the food-chain model \R{eq:(1.1)}-\R{eq:(1.3)} must remain nonnegative for initial nonnegative datum. Hence, the same requirement is in place for our alternative nonlinear model in \R{nonlinearmodel}. We first provide the criteria to ensure nonnegativity of the numerical solution and then use these criteria in the ensuing discussions. To this end, we have the following results.\\

\begin{lemma}
$\|T\| \le \frac{4}{h^2}.$\\
\end{lemma}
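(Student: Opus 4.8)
The plan is to exploit the symmetry of $T$. Since $T$ is real and symmetric, the spectral norm $\|T\|$ coincides with its spectral radius, so that $\|T\| = \max_k |\lambda_k|$, where $\lambda_1,\dots,\lambda_N$ denote the (necessarily real) eigenvalues of $T$. It therefore suffices to localize the spectrum of $T$ inside an interval of the form $[-4/h^2,\,0]$, from which the bound $|\lambda_k| \le 4/h^2$ follows for every $k$.

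To accomplish the localization I would invoke Gershgorin's circle theorem. Each diagonal entry of $T$ equals $-2/h^2$, while the absolute off-diagonal row sums equal $2/h^2$ for the interior rows and $1/h^2$ for the first and last rows. Hence every eigenvalue $\lambda$ lies in a disc centered at $-2/h^2$ of radius at most $2/h^2$; because the eigenvalues are real these discs reduce to intervals on the real axis, giving $\lambda \in [-4/h^2,\,0]$. In particular $|\lambda| \le 4/h^2$ for each eigenvalue, and taking the maximum yields $\|T\| \le 4/h^2$, as claimed.

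An alternative and sharper route uses the explicitly known spectrum of the one-dimensional discrete Laplacian: the eigenvalues of $T$ are $\lambda_k = -\frac{4}{h^2}\sin^2\left(\frac{k\pi}{2(N+1)}\right)$ for $k=1,\dots,N$. Since every $\sin^2$ factor is strictly less than $1$, this in fact produces the strict inequality $\|T\| = \frac{4}{h^2}\sin^2\left(\frac{N\pi}{2(N+1)}\right) < \frac{4}{h^2}$, with $4/h^2$ serving as the sharp limiting constant as $N \to \infty$. Either argument establishes the lemma.

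As for difficulty, this statement is essentially routine and I do not anticipate a genuine obstacle. The only step requiring a little care is the reduction from the operator norm to the largest eigenvalue magnitude, which relies crucially on the symmetry of $T$; for a nonsymmetric spatial discretization this identity would fail and one would instead have to argue through $\|T\| = \sqrt{\rho(T^\top T)}$. The Gershgorin estimate is robust to such modifications, which is presumably why the crude constant $4/h^2$ is preferred here over the sharp value, since it is exactly the quantity that will be reused in the stability analysis that follows.
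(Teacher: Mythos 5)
Your proposal is correct, and it actually contains the paper's own proof as its ``alternative'' route: the paper disposes of the lemma in one line by citing the explicit spectrum $\lambda_j = -\frac{4}{h^2}\sin^2\left(\frac{\pi j}{2(N+1)}\right)$, $j=1,\dots,N$, of the discrete Dirichlet Laplacian, exactly your second argument (with the reduction from $\|T\|$ to the spectral radius left implicit). Your primary argument via Gershgorin's circle theorem is a genuinely different and more elementary route: it needs only the sign pattern and magnitudes of the entries of $T$, not the closed-form eigenvalues, and, as you note, it would survive a change of spatial discretization for which the spectrum is not known explicitly. What it gives up is sharpness --- Gershgorin only places the spectrum in $[-4/h^2,\,0]$, while the explicit eigenvalues show the inequality is strict and identify $\|T\|$ exactly --- but sharpness is irrelevant here, since the constant $4/h^2$ is all that the subsequent stability analysis (Theorem 3.1 and its corollary) uses. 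The one step deserving care, which you correctly flag, is that $\|T\| = \max_k |\lambda_k|$ relies on the symmetry of $T$; the paper takes this for granted.
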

\begin{proof}
This is a standard property of the matrix $T,$ which has eigenvalues $\lambda_j = - \frac{4}{h^2} \sin^2\left(\dfrac{\pi j}{2 (N+1)}\right),$ $j=1,\dots,N.$
\end{proof}

\begin{thm} If
\bb{cfl}
\frac{\tau_k}{h^2} < \frac{1}{2\max\{1,\max_{i=1,\dots,N^2}\{(\vv_k)_i\}\}},
\ee
then the matrices
$$I-\frac{\tau_k}{2}P,~~I-\frac{\tau_k}{2}R,~~I+\frac{\tau_k}{2}P,~~I+\frac{\tau_k}{2}R,~~I-\frac{\tau_k}{2}PD_{k+1},~~I-\frac{\tau_k}{2}RD_{k+1},~~I+\frac{\tau_k}{2}PD_k,~~I+\frac{\tau_k}{2}RD_k$$
are nonsingular. Also, $I-\frac{\tau_k}{2}P,~I-\frac{\tau_k}{2}R,~I-\frac{\tau_k}{2}PD_{k+1},~I-\frac{\tau_k}{2}RD_{k+1}$ are inverse positive and $I+\frac{\tau_k}{2}P,~I+\frac{\tau_k}{2}R,~I+\frac{\tau_k}{2}PD_k,~I+\frac{\tau_k}{2}RD_k$ are nonnegative.
\end{thm}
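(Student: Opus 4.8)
The plan is to sort the eight matrices into two families and apply the appropriate nonnegativity tool to each. The key structural facts I would exploit are: (i) a strictly diagonally dominant $Z$-matrix (nonpositive off-diagonals) with positive diagonal is a nonsingular $M$-matrix and is therefore inverse positive, i.e. has an entrywise nonnegative inverse; and (ii) writing $M := \max\{1,\max_i (\vv_k)_i\} \ge 1$, the bound \R{cfl} immediately yields $\tau_k/h^2 < 1/(2M) \le 1/2$, which is the single inequality that drives every sign estimate. Throughout I would carry the entrywise nonnegativity $\vv_k \ge 0$ and $\vv_{k+1} \ge 0$ as a standing hypothesis, consistent with the nonnegativity-preservation theme of this section.

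For the four matrices free of $D$, I would use the Kronecker identities $I - \frac{\tau_k}{2}P = I_N \otimes (I_N - \frac{\tau_k}{2}T)$ and $I - \frac{\tau_k}{2}R = (I_N - \frac{\tau_k}{2}T)\otimes I_N$, and likewise with $+$, reducing everything to the single tridiagonal factor $I_N \mp \frac{\tau_k}{2}T$, since a Kronecker product with $I_N$ preserves both nonnegativity and inverse positivity. The factor $I_N - \frac{\tau_k}{2}T$ has diagonal $1 + \tau_k/h^2$ and nonpositive off-diagonals of absolute row-sum at most $\tau_k/h^2$, hence is a strictly diagonally dominant $Z$-matrix with positive diagonal, so it is a nonsingular $M$-matrix and inverse positive. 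The factor $I_N + \frac{\tau_k}{2}T$ has off-diagonals $\tau_k/(2h^2)\ge 0$ and diagonal $1 - \tau_k/h^2 > 0$ (this is exactly where $\tau_k/h^2 < 1/2$ is used), so it is entrywise nonnegative; its eigenvalues $1 + \frac{\tau_k}{2}\lambda_j$ are positive because $|\frac{\tau_k}{2}\lambda_j| \le \frac{\tau_k}{2}\cdot\frac{4}{h^2} < 1$ by the preceding lemma, which gives nonsingularity.

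The four matrices carrying a diagonal factor do not split as a Kronecker product, so I would argue entrywise from $(PD)_{ab} = P_{ab}(\vv)_b$ (and the analogous identity for $R$). For $I + \frac{\tau_k}{2}PD_k$ the diagonal entry $1 - \frac{\tau_k}{h^2}(\vv_k)_a$ exceeds $1/2$, since $\frac{\tau_k}{h^2}(\vv_k)_a < (\vv_k)_a/(2M) \le 1/2$, while the off-diagonals $\frac{\tau_k}{2h^2}(\vv_k)_b \ge 0$; hence the matrix is nonnegative, and Gershgorin gives nonsingularity because every disk is centered in $(1/2,1]$ with radius at most $\frac{\tau_k}{h^2}\max_i(\vv_k)_i < 1/2$, excluding $0$. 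For $I - \frac{\tau_k}{2}PD_{k+1}$ the diagonal is $1 + \frac{\tau_k}{h^2}(\vv_{k+1})_a \ge 1$ and the off-diagonals $-\frac{\tau_k}{2h^2}(\vv_{k+1})_b \le 0$, so it is a $Z$-matrix; the inverse-positive conclusion then follows from strict row diagonal dominance once the off-diagonal sum $\frac{\tau_k}{h^2}\max_i(\vv_{k+1})_i$ is shown to be $< 1$.

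I expect that last point to be the main obstacle: condition \R{cfl} is stated in terms of $\vv_k$, whereas the two inverse-positive matrices $I - \frac{\tau_k}{2}PD_{k+1}$ and $I - \frac{\tau_k}{2}RD_{k+1}$ depend on $\vv_{k+1}$. I would close this gap via the Euler predictor defining $D_{k+1}$, which gives $\vv_{k+1} = \vv_k + \bigo{\tau_k}$ and hence $\max_i(\vv_{k+1})_i \le \max_i(\vv_k)_i + \bigo{\tau_k}$, so the same step-size restriction (read stepwise at level $k+1$, or tightened by a harmless constant) secures $\frac{\tau_k}{h^2}\max_i(\vv_{k+1})_i < 1$ and thus strict diagonal dominance. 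The remaining checks, namely the boundary rows of $T$ and the $R$-versions obtained by swapping the Kronecker order, are routine and require no new idea.
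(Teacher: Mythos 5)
Your handling of the four $D$-free matrices and of $I+\frac{\tau_k}{2}PD_k$, $I+\frac{\tau_k}{2}RD_k$ is correct and is essentially the paper's own argument in different packaging: the paper derives nonsingularity of the ``plus'' matrices from the norm bound $\|T\|\le 4/h^2$ and reads nonnegativity off the entry signs under \R{cfl}, and it derives inverse positivity of $I-\frac{\tau_k}{2}P$, $I-\frac{\tau_k}{2}R$ from exactly the $Z$-matrix/weak row-sum criterion you invoke; your Kronecker factorization and Gershgorin substitutes change nothing essential.

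The genuine gap is at the step you yourself flag as the main obstacle, and your proposed repair does not close it. You need $\frac{\tau_k}{h^2}\max_i(\vv_{k+1})_i<1$ and argue that the Euler predictor gives $\vv_{k+1}=\vv_k+\bigo{\tau_k}$, so \R{cfl} carries over after ``tightening by a harmless constant.'' The constant is not harmless: the Euler increment is $\tau_k\l[(P+R+PD_k+RD_k)\vv_k+\f_k\r]$, and $P$, $R$ have entries of size $1/h^2$, so absent any smoothness hypothesis on the \emph{discrete} solution (and this theorem makes none) the increment is of size $\frac{\tau_k}{h^2}\l(1+\max_i(\vv_k)_i\r)\max_i(\vv_k)_i$, which under \R{cfl} is of the same order as $\vv_k$ itself, not $\bigo{\tau_k}$. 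Concretely, crude bounds give $\frac{\tau_k}{h^2}\max_i(\vv_{k+1})_i \le \frac{\tau_k}{h^2}\max_i(\vv_k)_i+8\l(\frac{\tau_k}{h^2}\r)^2\l(1+\max_i(\vv_k)_i\r)\max_i(\vv_k)_i+\cdots$, and under \R{cfl} all one can conclude is that the second term is at most $4$; the inequality you need can therefore not be established this way. The paper's proof avoids this by substituting the predictor $D_{k+1}=D_k+\tau_k\,\mbox{diag}\l((P+R+PD_k+RD_k)\vv_k+\f_k\r)+\OO\l(\tau_k^2\r)$ into the matrix itself: the off-diagonal ($Z$-matrix) sign pattern then comes from $\vv_k\ge 0$ alone (whereas you must take $\vv_{k+1}\ge 0$ as an unproved standing hypothesis), and the $D_{k+1}$-dependence is demoted to a \emph{diagonal} correction of size $\frac{\tau_k^2}{2}\l|\l((P+R+PD_k+RD_k)\vv_k\r)_j\r|$. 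The weak row-sum criterion is then checked row by row, where this correction --- bounded via \R{cfl} by $4\tau_k(\vv_k)_j$ --- is absorbed by the diagonal surplus $\frac{\tau_k}{h^2}(\vv_k)_j$, whose $1/h^2$ factor dominates once $h^2\le 1/2$. To complete your proof you must either reproduce this row-by-row bookkeeping (comparing the correction against the $1/h^2$-scaled diagonal surplus rather than against the constant $1$), or add the assumption that the discrete Laplacian of $\vv_k$ is bounded uniformly in $h$, which is not part of the statement.
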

\begin{proof}
Using the previous lemma, we have
\bbbb
\l\|\frac{\tau_k}{2}P\r\| & = & \frac{\tau_k}{2}\|I_n\otimes T\| ~~=~~  \frac{\tau_k}{2}\|T\|\\
& \leq & \frac{\tau_k}{2}\times \frac{4}{h^2} ~~<~~ 1.
%\sin^2\left(\frac{\pi j}{2 (N+1)}\right)\\
%& < & 1.
\eeee
Hence, $I+\frac{\tau_k}{2}P$ is nonsingular and nonnegative. A similar argument shows that $I+\frac{\tau_k}{2}R$ is nonsingular and nonnegative. Next, consider
\bbbb
\l\|\frac{\tau_k}{2}PD_{k}\r\| & = & \frac{\tau_k}{2}\|(I_N\otimes T)D_{k}\| ~~\le~~ \frac{\tau_k}{2}\|T\|\|D_{k}\|\\
& \le & \frac{\tau_k}{2} \times \frac{4}{h^2} \times \|\vv_{k}\| ~~<~~ 1.
\eeee
Hence, $I+\frac{\tau_k}{2}PD_{k}$ is nonsingular and nonnegative. A similar argument shows that $I+\frac{\tau_k}{2}RD_{k}$ is nonsingular and nonnegative.

Now consider the matrix $M = I-\frac{\tau_k}{2}P.$ Since $M_{ij}\le 0$ for $i\neq j$ and the weak row sum criterion is satisfied, we have that $M^{-1}$ exists and that $M$ is inverse positive \cite{BeauregardJCAM,Henrici}. A similar argument gives that $I-\frac{\tau_k}{2}R$ is nonsingular and inverse positive. Next, we consider the matrix
$$I-\frac{\tau_k}{2}PD_{k+1} = I-\frac{\tau_k}{2}PD_{k+1} - \frac{\tau_k^2}{2}\mbox{diag}\l((P+R+PD_k+RD_k)\vv_k\r)+\OO\l(\tau_k^3\r).$$
Due to the accuracy of our scheme, we only need to consider the matrix
$$N = I-\frac{\tau_k}{2}PD_{k+1} - \frac{\tau_k^2}{2}\mbox{diag}\l((P+R+PD_k+RD_k)\vv_k\r).$$
Note that $N_{i,j}\le 0$ for $i\neq j$ since the off-diagonal elements of $I-\frac{\tau_k}{2}PD_k$ are nonpositive. Further, since we are concerned with the positivity of the solution, we assume that $\vv_k\ge 0.$ Now consider
$$\sum_{j=1}^{N^2}N_{i,j} = 1 - \frac{\tau_k}{2h^2}\l((\vv_k)_{j-1}-2(\vv_k)_j + (\vv_k)_{j+1}\r) - \frac{\tau_k^2}{2}\mbox{diag}\l((P+R+PD_k+RD_k)\vv_k\r)_j,$$
for $i=1,\dots,N^2$ and where $(\vv_k)_0 = (\vv_k)_{N^2+1} = 0.$ We need to show that the weak row sum criterion is satisfied, thus,
\bbbb
\sum_{j=1}^{N^2}N_{i,j} & = & 1 - \frac{\tau_k}{2h^2}\l((\vv_k)_{j-1}-2(\vv_k)_j + (\vv_k)_{j+1}\r) - \frac{\tau_k^2}{2}\mbox{diag}\l((P+R+PD_k+RD_k)\vv_k\r)_j\\
& = & 1 + \frac{\tau_k}{h^2}(\vv_k)_j - \frac{\tau_k}{2h^2}\l((\vv_k)_{j-1}+(\vv_k)_{j+1}\r) - \frac{\tau_k^2}{2}\mbox{diag}\l((P+R+PD_k+RD_k)\vv_k\r)_j\\
& > & \frac{1}{2} + \frac{\tau_k}{h^2}(\vv_k)_j - \frac{\tau_k^2}{2}\mbox{diag}\l((P+R+PD_k+RD_k)\vv_k\r)_j\\
& > & \frac{1}{2} + \frac{\tau_k}{h^2}(\vv_k)_j - \frac{\tau_k^2}{2}\mbox{diag}\l((\|P\|+\|R\|+\|P\|\|D_k\|+\|R\|\|D_k\|)\vv_k\r)_j\\
& \ge & \frac{1}{2} + \frac{\tau_k}{h^2}(\vv_k)_j - 4\tau_k\l[\frac{\tau_k}{h^2}\l(1+\max\{(\vv_k)_j\}\r)\r](\vv_k)_j\\
& > & \frac{1}{2} + \frac{\tau_k}{h^2}(\vv_k)_j - 4\tau_k(\vv_k)_j ~~>~~ \frac{\tau_k}{h^2}\max\{1,\max\{(\vv_k)_j\}\} + \frac{\tau_k}{h^2}(\vv_k)_j - 4\tau_k(\vv_k)_j\\
& \ge & 2\tau_k\l[\frac{1}{h^2}-2\r](\vv_k)_j ~~\ge~~  0,
\eeee
since $h\le \sqrt{1/2}$ for $N\gg 1.$ Hence, we have that the weak row sum criterion is satisfied, since we have strict inequality for $j=1$ and $N^2.$ A similar argument gives that $I-\frac{\tau_k}{2}RD_{k+1}$ is nonsingular and inverse positive.
\end{proof}

\begin{cor} If
\bb{cfl1}
\frac{\tau_k}{h^2} < \frac{1}{4\max\{1,\max_{i=1,\dots,N^2}\{(\vv_k)_i\}\}},
\ee
then the conditions from Theorem 3.1 still hold, and in addition, the matrices
$$I-\tau_kPD_{k+1},~~I-\tau_kRD_{k+1},~~I+\tau_kPD_k,~~I+\tau_kRD_k$$
are nonsingular. Also, $I-\tau_kPD_{k+1}~\mbox{and}~I-\tau_kRD_{k+1}$ are inverse positive and $I+\tau_kPD_k~\mbox{and}~I+\tau_kRD_k$ are nonnegative.
\end{cor}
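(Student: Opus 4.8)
The plan is to bootstrap from Theorem 3.1, exploiting that the hypothesis \R{cfl1} is strictly stronger than \R{cfl}. Since
$$\frac{\tau_k}{h^2} < \frac{1}{4\max\{1,\max_{i}(\vv_k)_i\}} < \frac{1}{2\max\{1,\max_{i}(\vv_k)_i\}},$$
the condition \R{cfl} holds and every conclusion of Theorem 3.1 is inherited. It then remains to treat the four matrices carrying the full coefficient $\tau_k$. The guiding observation is that each is obtained from the corresponding matrix of Theorem 3.1 by the replacement $\tfrac{\tau_k}{2}\mapsto\tau_k$, i.e.\ they are the Theorem 3.1 matrices at the doubled step $2\tau_k$, and that \R{cfl1} is exactly \R{cfl} read at $2\tau_k$, namely $\tfrac{2\tau_k}{h^2} < \tfrac{1}{2\max\{1,\max_i(\vv_k)_i\}}$. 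The extra factor of two built into \R{cfl1} is therefore precisely what is needed to absorb the doubled coefficient, and I would re-run the proof of Theorem 3.1 with $\tau_k$ in place of $\tfrac{\tau_k}{2}$ throughout.

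For the nonnegative matrices $I+\tau_k PD_k$ and $I+\tau_k RD_k$, I would repeat the norm estimate from the first part of Theorem 3.1. Using Lemma 3.1 and $\norm{D_k}=\max_i(\vv_k)_i$,
$$\l\|\tau_k PD_k\r\| \le \tau_k\,\norm{T}\,\norm{D_k} \le \tau_k\cdot\frac{4}{h^2}\cdot\max_i(\vv_k)_i < 1$$
by \R{cfl1}, which gives nonsingularity. Nonnegativity follows from the sign pattern: the off-diagonal entries of $PD_k$ are nonnegative (those of $T$ equal $1/h^2>0$ and $D_k\ge 0$), while each diagonal entry of $I+\tau_k PD_k$ equals $1-\tfrac{2\tau_k}{h^2}(\vv_k)_i>1-\tfrac12>0$ under \R{cfl1}. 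The matrix $I+\tau_k RD_k$ is handled identically.

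For the inverse-positive matrices $I-\tau_k PD_{k+1}$ and $I-\tau_k RD_{k+1}$, I would reproduce the weak row sum computation of Theorem 3.1 with the coefficient $\tau_k$. After inserting the Euler expansion $D_{k+1}=D_k+\tau_k\,\mbox{diag}\l((P+R+PD_k+RD_k)\vv_k+\f_k\r)+\bigo{\tau_k^2}$ and dropping the $\bigo{\tau_k^3}$ remainder, the off-diagonal entries stay nonpositive, and the $i$-th row sum again collapses to a quantity of the form $c\,\tau_k\l[\tfrac{1}{h^2}-c'\r](\vv_k)_i\ge 0$ valid for the mesh sizes in force (here in fact for all $h<1$, since $h=1/(N+1)$), with strict inequality at $i=1$ and $i=N^2$. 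The weak row sum criterion together with the nonpositive off-diagonals then yields nonsingularity and inverse positivity exactly as in \cite{BeauregardJCAM,Henrici}; the argument for $I-\tau_k RD_{k+1}$ is symmetric.

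The one point demanding care --- and the main obstacle --- is that doubling the leading coefficient also rescales the $\bigo{\tau_k^2}$ contribution generated by the Euler expansion of $D_{k+1}$ (its numerical constant changes). I would verify that this is harmless to the sign analysis: in the row sum chain this quadratic term is dominated, after applying \R{cfl1}, by the $\tfrac12$ and $\tfrac{\tau_k}{h^2}(\vv_k)_i$ contributions, so each inequality in the chain is preserved independently of the precise $\tau_k^2$ constant. Thus the positivity of the row sums, and with it the stated conclusion, persists.
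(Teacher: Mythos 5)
Your proposal is correct and is essentially the argument the paper intends: the paper states this corollary without any written proof, treating it as an immediate consequence of Theorem 3.1 once one notes that \R{cfl1} is precisely \R{cfl} evaluated at the doubled step $2\tau_k$, so that the norm bound, the sign pattern, and the weak row sum chain of Theorem 3.1 all survive the replacement of $\frac{\tau_k}{2}$ by $\tau_k$. Your filled-in details, including the observation that the doubled row-sum chain only requires $h<1$ rather than $h\le\sqrt{1/2}$ and that the $\OO\l(\tau_k^2\r)$ Euler-expansion term remains dominated, are exactly what a complete write-up of the omitted proof would contain.
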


In order to prove stability, we now introduce a definition and some lemmas.

\begin{define}
Let $\|\cdot\|$ be an induced matrix norm. Then the
associated logarithmic norm $\mu : \mathbb{C}^{n\times n}\to \mathbb{R}$ of $A\in\mathbb{C}^{n\times n}$ is defined as
$$\mu(A) = \lim_{h\to 0^+} \frac{\|I_n + hA\| - 1}{h},$$
where $I_n\in\mathbb{C}^{n\times n}$ is the identity matrix.
\end{define}

\begin{rem}
When the induced matrix norm being considered is the spectral norm,
then $\mu(A) = \max\left\{\lambda : \lambda\ \text{is an eigenvalue of}\ (A+A^*)/2\right\}.$
\end{rem}

\begin{rem}
If the matrix $A$ negative definite, then we have $\mu(A)\le 0.$
\end{rem}

\begin{lemma}For $\alpha\in\mathbb{C}$ we have
$$\|E(\alpha A)\| \le E(\alpha \mu(A)),$$
where $E(\cdot)$ is the matrix exponential.
\end{lemma}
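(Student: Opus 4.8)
The plan is to recognize this as the classical exponential bound associated with the logarithmic norm and to prove it by reducing the matrix statement to a scalar differential inequality in the real parameter; I treat $\alpha\ge 0$ as the relevant case, so that $E(\alpha\mu(A))$ is the scalar $e^{\alpha\mu(A)}$ (recall $\mu(A)\in\mathbb{R}$) and the claimed bound is an inequality between nonnegative reals. Set $\phi(\alpha)=\|E(\alpha A)\|$, which is continuous, nonnegative, and satisfies $\phi(0)=\|I_n\|=1$. The goal is to show $\phi(\alpha)\le e^{\alpha\mu(A)}$ for all $\alpha\ge 0$.

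First I would connect the matrix exponential back to the definition of $\mu$. Expanding $E(hA)=I_n+hA+\OO(h^2)$ and applying the forward and reverse triangle inequalities yields $\bigl|\,\|E(hA)\|-\|I_n+hA\|\,\bigr|\le \|\OO(h^2)\|=\OO(h^2)$, so that
$$\lim_{h\to 0^+}\frac{\|E(hA)\|-1}{h}=\lim_{h\to 0^+}\frac{\|I_n+hA\|-1}{h}=\mu(A);$$
that is, the quadratic remainder in $E(hA)$ is invisible in the limit defining the logarithmic norm.

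Next I would derive a one-sided differential inequality for $\phi$. Using the semigroup identity $E((\alpha+h)A)=E(hA)\,E(\alpha A)$ (valid since $hA$ and $\alpha A$ commute) together with submultiplicativity of the induced norm, I get $\phi(\alpha+h)\le \|E(hA)\|\,\phi(\alpha)$, hence $\phi(\alpha+h)-\phi(\alpha)\le(\|E(hA)\|-1)\,\phi(\alpha)$. Dividing by $h>0$ and taking the limit superior as $h\to 0^+$, the previous step gives the Dini-derivative bound $D^+\phi(\alpha)\le \mu(A)\,\phi(\alpha)$ for every $\alpha\ge 0$, where $D^+\phi(\alpha)=\limsup_{h\to 0^+}\bigl(\phi(\alpha+h)-\phi(\alpha)\bigr)/h$. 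Finally I would invoke the comparison form of Gr\"onwall's inequality for upper Dini derivatives. A clean way to package this is to set $\psi(\alpha)=e^{-\alpha\mu(A)}\phi(\alpha)$ and check that $D^+\psi(\alpha)\le 0$; then $\psi$ is nonincreasing, so $\psi(\alpha)\le\psi(0)=1$, which is exactly $\phi(\alpha)\le e^{\alpha\mu(A)}$.

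The main obstacle is that $\phi(\alpha)=\|E(\alpha A)\|$ need not be differentiable in the ordinary sense, since the norm of a smooth matrix-valued curve is generally only one-sided differentiable. The argument therefore cannot use a classical derivative and must be carried out with the upper right Dini derivative $D^+$, relying on the monotonicity version of Gr\"onwall's inequality valid for merely continuous functions with a bounded Dini derivative. The secondary technical point is the justification in the first step that the $\OO(h^2)$ term in $E(hA)$ does not contaminate the limit defining $\mu(A)$; this is immediate from the reverse triangle inequality but is precisely where the identification $E(hA)\approx I_n+hA$ must be made rigorous.
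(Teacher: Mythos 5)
Your argument is correct for the case it treats, but note that the paper does not actually prove this lemma: its entire ``proof'' is the citation ``See Golub and Van Loan,'' so the comparison here is between your self-contained argument and a pointer to the literature. Your route --- the semigroup identity $E((\alpha+h)A)=E(hA)E(\alpha A)$ plus submultiplicativity to get the Dini-derivative inequality $D^+\phi(\alpha)\le\mu(A)\,\phi(\alpha)$, followed by the Gr\"onwall-type monotonicity of $\psi(\alpha)=e^{-\alpha\mu(A)}\phi(\alpha)$ --- is the standard textbook proof of the logarithmic-norm bound, and you handle the two genuinely delicate points correctly: that the $\mathcal{O}(h^2)$ remainder in $E(hA)=I_n+hA+\mathcal{O}(h^2)$ is invisible in the limit defining $\mu(A)$ (reverse triangle inequality), and that $\phi(\alpha)=\|E(\alpha A)\|$ need not be classically differentiable, forcing the use of $D^+$. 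Your write-up buys two things the citation does not: self-containedness, and an implicit correction of the statement itself --- as written, with $\alpha\in\mathbb{C}$, the inequality is not even meaningful, since $e^{\alpha\mu(A)}$ is nonreal for nonreal $\alpha$; the true result holds for $\alpha\ge 0$, which is the only case the paper uses (e.g., $\alpha=\tau_k/2$ in Lemma 3.4 and in the stability theorems). What the citation buys is brevity. If you wanted a shorter self-contained proof, you could bypass Dini derivatives entirely via the product formula $E(\alpha A)=\lim_{n\to\infty}\left(I_n+\frac{\alpha}{n}A\right)^n$, since $\left\|I_n+\frac{\alpha}{n}A\right\|^n=\left(1+\frac{\alpha}{n}\mu(A)+o(1/n)\right)^n\to e^{\alpha\mu(A)}$ directly from the definition of $\mu$; but your version is rigorous as it stands.
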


\begin{proof}
See Golub and Van Loan \cite{Golub}.
\end{proof}

\begin{lemma}
Let \R{cfl1} hold, then $\mu(PD_k),~\mu(RD_k),~\mu(PD_{k+1}),~\mu(RD_{k+1})\le \frac{1}{2}\max\{|\Delta \w_k|\} + ch^2,$ where $\w_k$ is the exact solution to \R{nonlinearmodel} at time $t=t_k$ and $c$ is a positive constant independent of $h.$
\end{lemma}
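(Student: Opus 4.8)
The plan is to reduce every bound to an eigenvalue estimate for the \emph{symmetric part} of each product, and then to read the dominant eigenvalue off the discrete Laplacian stencil. By the remark following Definition 3.1, since the spectral norm is in force, $\mu(PD_k)=\lambda_{\max}\!\left(\tfrac12\l(PD_k+(PD_k)^{\top}\r)\right)$, and because $P$, $R$, $D_k$, $D_{k+1}$ are all real symmetric this becomes $\mu(PD_k)=\lambda_{\max}\!\left(\tfrac12(PD_k+D_kP)\right)$, with the analogous identities for $RD_k$, $PD_{k+1}$, $RD_{k+1}$. Thus the entire lemma is a statement about the largest eigenvalue of the symmetrized products, and the non-symmetry of $PD_k$ is handled once and for all at this step.

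Next I would exploit the Kronecker structure. Writing the diagonal of $D_k$ as $d_{i,j}=(\vv_k)_{i,j}$ and recalling $P=I_N\otimes T$, the matrix $P$ is block diagonal with $N$ copies of $T$, one block per fixed $y_j$; since $D_k$ is diagonal, $\tfrac12(PD_k+D_kP)$ is block diagonal with blocks $\tfrac12\l(TD^{(j)}+D^{(j)}T\r)$, where $D^{(j)}=\mathrm{diag}(d_{1,j},\dots,d_{N,j})$. Hence $\mu(PD_k)=\max_j\lambda_{\max}\l(\tfrac12(TD^{(j)}+D^{(j)}T)\r)$. A direct computation shows that this symmetrized block has diagonal entries $-2d_{i,j}/h^2$ and off-diagonal entries $(d_{i,j}+d_{i\pm1,j})/(2h^2)$. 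Applying Gershgorin's theorem row by row, every eigenvalue is bounded above by
\[
-\frac{2d_{i,j}}{h^2}+\frac{1}{2h^2}\big((d_{i-1,j}+d_{i,j})+(d_{i,j}+d_{i+1,j})\big)=\frac{1}{2}\,\frac{d_{i-1,j}-2d_{i,j}+d_{i+1,j}}{h^2},
\]
that is, one half of the central second difference in $x$ of the nodal data. The Dirichlet rows $i=1,N$ (where $d_{0,j}=d_{N+1,j}=0$) are treated separately, but there the missing off-diagonal only shrinks the Gershgorin radius, so the same bound persists. The identical argument with $R=T\otimes I_N$ produces the corresponding $y$-difference bound for $\mu(RD_k)$.

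Finally I would pass from the discrete data to the exact solution. Taking $d_{i,j}=\w_k(x_i,y_j)$ to leading order and using Taylor's theorem, the central second difference equals $\partial_{xx}\w_k(x_i,y_j)+\OO(h^2)$ (respectively $\partial_{yy}\w_k+\OO(h^2)$ for $R$), so the maximum over $i,j$ of one half of it is controlled by the second-order spatial derivatives of $\w_k$, yielding $\tfrac12\max|\Delta\w_k|+ch^2$ in the notation of the statement, with $c$ depending only on the fourth derivatives of $\w_k$ and hence independent of $h$. For the implicit factors $PD_{k+1}$, $RD_{k+1}$ I would insert the Euler expansion $D_{k+1}=D_k+\tau_k\,\mathrm{diag}\big((P+R+PD_k+RD_k)\vv_k+\f_k\big)+\OO(\tau_k^2)$ and invoke \R{cfl1}, which forces $\tau_k=\OO(h^2)$; the induced correction to the second difference is then $\OO(\tau_k)=\OO(h^2)$ and is absorbed into $ch^2$, so the level-$(k+1)$ bound still reads in terms of $\w_k$. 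The main obstacle is the Gershgorin step: one must verify that symmetrizing the non-symmetric product collapses the diagonal and off-diagonal contributions \emph{exactly} into the discrete second-derivative stencil (boundary rows included), since it is this cancellation — rather than the crude estimate $\|PD_k\|=\OO(h^{-2})$ — that yields the small, solution-dependent bound required for the subsequent stability analysis.
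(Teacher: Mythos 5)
Your proposal follows essentially the same route as the paper's proof: use Remark 3.1 to reduce $\mu$ to the largest eigenvalue of the symmetrized product $\frac{1}{2}\l(PD_k+D_kP\r)$, exploit the Kronecker/block-diagonal structure to work with tridiagonal blocks, apply Ger\u{s}chgorin (with positivity of the solution making the radii sum exactly) so that center plus radius collapses to half the discrete second difference, and then invoke Taylor's theorem to identify this with $\frac{1}{2}u_{xx}+\OO\l(h^2\r)$. The only substantive difference is that you explicitly justify the $PD_{k+1},~RD_{k+1}$ bounds via the Euler expansion of $D_{k+1}$ together with $\tau_k=\OO\l(h^2\r)$ from \R{cfl1}, a step the paper dispatches with ``the other bounds follow similarly.''
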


\begin{proof}
We only need to consider the matrix $PD_k,$ as the other results will follow in a similar manner. By Remark 3.1 we have $\mu(PD_k) = \max\l\{\lambda : \lambda\ \text{is an eigenvalue of}\ (PD_k + (PD_k)^*)/2\r\}.$ By direct calculation we have $PD_k+(PD_k)^* = PD_k + D_kP = \mbox{diag}(X_1,\dots,X_N),$ thus we only need consider the eigenvalues of each block. To that end, we see
$$\l(X_\ell\r)_{i,j} = \l\lbrace\begin{array}{cl}
\l(u_{i-1,\ell}+u_{i,\ell}\r)/2h^2, & i-j=1,\\
-2u_{i,\ell}/h^2, & i-j=0\\
\l(u_{i,\ell}+u_{i+1,\ell}\r)/2h^2, & i-j=-1\\
0, & \mbox{otherwise}.
\end{array}\r.$$
By applying Ger\u{s}chgorin's circle theorem we have
\bbbb
\l|\lambda_i+2u_{i,\ell}/h^2\r| &\le& \l|\l(u_{i-1,\ell}+u_{i,\ell}\r)/2h^2\r| + \l|\l(u_{i,\ell}+u_{i+1,\ell}\r)/2h^2\r|\\
&=& \l(u_{i-1,\ell}+u_{i,\ell}\r)/2h^2 + \l(u_{i,\ell}+u_{i+1,\ell}\r)/2h^2,
\eeee
due to the positivity of the solution. Thus we have
\bbbb
\lambda_i + 2u_{i,\ell}/h^2 & \le & \l(u_{i-1,\ell}+u_{i,\ell}\r)/2h^2 + \l(u_{i,\ell}+u_{i+1,\ell}\r)/2h^2\\
\lambda_i & \le & \frac{u_{i-1,\ell}-2u_{i,\ell}+u_{i+1,\ell}}{2h^2}\\
& = & \frac{1}{2}u_{xx}\bigg|_{(x_i,y_\ell,t_k)}+\OO\l(h^2\r)
\eeee
and by a similar argument
\bbbb
-\l(\frac{1}{2}u_{xx}\bigg|_{(x_i,y_\ell,t_k)}+\OO\l(h^2\r)\r) &\le& \lambda_i.
\eeee
Combining the above results gives $\mu(PD_k)\le \max\{|\Delta\w_k|/2\}+ch^2,$ where $c$ is a positive constant independent of $h,$ which is the desired result. The other bounds follow similarly.
\end{proof}

\begin{lemma}
If \R{cfl1} holds and $\w_k\in W^{2,\infty}(\Omega),$ for $k\ge 0,$ where $\w_k$ is the exact solution to \R{nonlinearmodel} at time $t=t_k,$ then we have
$$\l\|I+\frac{\tau_k}{2}P\r\|,~\l\|I+\frac{\tau_k}{2}R\r\|,~\l\|\l(I-\frac{\tau_k}{2}P\r)^{-1}\r\|,~\l\|\l(I-\frac{\tau_k}{2}R\r)^{-1}\r\|\le 1+\OO\l(\tau_k^2\r)$$
and
$$\l\|I+\frac{\tau_k}{2}PD_k\r\|,~\l\|I+\frac{\tau_k}{2}RD_k\r\|,~\l\|\l(I-\frac{\tau_k}{2}PD_{k+1}\r)^{-1}\r\|,~\l\|\l(I-\frac{\tau_k}{2}RD_{k+1}\r)^{-1}\r\|\le 1+\OO\l(\tau_k\r).$$
\end{lemma}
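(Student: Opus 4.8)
The plan is to split the eight factors into three groups and to prove the sharper $\OO(\tau_k^2)$ bound for the constant-coefficient factors and the $\OO(\tau_k)$ bound for the remaining factors. For the four constant-coefficient factors $I\pm\frac{\tau_k}{2}P$ and $I\pm\frac{\tau_k}{2}R$ I would argue purely from symmetry. Since $T$ is symmetric with eigenvalues in $[-4/h^2,0)$, both $P=I_N\otimes T$ and $R=T\otimes I_N$ are symmetric negative definite with $\|P\|=\|R\|=\|T\|\le 4/h^2$; under \R{cfl1} this gives $\frac{\tau_k}{2}\|P\|\le 2\tau_k/h^2<\frac12$. Hence the eigenvalues of $I+\frac{\tau_k}{2}P$ lie in $(\frac12,1]$ and those of $I-\frac{\tau_k}{2}P$ in $[1,\frac32)$, and because these matrices are symmetric their spectral norms equal their spectral radii. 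This yields $\|I+\frac{\tau_k}{2}P\|\le 1$ and $\|(I-\frac{\tau_k}{2}P)^{-1}\|\le 1$, each trivially $\le 1+\OO(\tau_k^2)$, with the same computation for $R$.

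For the two inverse factors carrying $D_{k+1}$ I would exploit the matrix-exponential bound proved above. Writing $I-\frac{\tau_k}{2}PD_{k+1}=\frac{\tau_k}{2}\big(\frac{2}{\tau_k}I-PD_{k+1}\big)$ and using the Laplace representation $(\lambda I-A)^{-1}=\int_0^\infty e^{-\lambda t}E(tA)\,dt$, valid for $\lambda>\mu(A)$, together with $\|E(tA)\|\le E(t\mu(A))$ gives
$$\left\|\left(I-\frac{\tau_k}{2}PD_{k+1}\right)^{-1}\right\|\le\frac{1}{1-\frac{\tau_k}{2}\mu(PD_{k+1})}.$$
By the preceding lemma, $\mu(PD_{k+1})\le\frac12\max\{|\Delta\w_k|\}+ch^2$, and the hypothesis $\w_k\in W^{2,\infty}(\Omega)$ bounds this by a constant independent of $h$; hence for $\tau_k$ small the right-hand side is $1+\OO(\tau_k)$. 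The factor carrying $RD_{k+1}$ is handled identically.

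The forward factors $I+\frac{\tau_k}{2}PD_k$ and $I+\frac{\tau_k}{2}RD_k$ are the delicate case and I expect them to be the main obstacle. Because $PD_k$ is not symmetric the resolvent trick is unavailable, and convexity of $s\mapsto\|I+sPD_k\|$ only produces the lower bound $1+\frac{\tau_k}{2}\mu(PD_k)$. I would instead start from
$$\left\|I+\frac{\tau_k}{2}PD_k\right\|^2=\lambda_{\max}\!\left(I+\frac{\tau_k}{2}\left(PD_k+D_kP\right)+\frac{\tau_k^2}{4}D_kP^2D_k\right)$$
and try to show that the perturbation of the identity has largest eigenvalue $\OO(\tau_k)$. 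The trap is that the two terms cannot be bounded separately: $\frac{\tau_k^2}{4}\|D_kP^2D_k\|=\frac{\tau_k^2}{4}\|PD_k\|^2=\OO\big((\tau_k/h^2)^2\big)$, which under \R{cfl1} is only $\OO(1)$, not $\OO(\tau_k)$. The required cancellation is structural: the symmetric part $\frac{1}{2}(PD_k+D_kP)$, whose tridiagonal blocks were computed in the preceding lemma, is strongly negative definite exactly on the oscillatory modes where $D_kP^2D_k$ is largest. I would therefore estimate the combined matrix directly by a Ger\u{s}chgorin / discrete-energy argument in the spirit of the weak-row-sum computation used in the nonsingularity theorem, using the CFL bound $\frac{\tau_k}{h^2}\max\{1,\max_i(\vv_k)_i\}<\frac14$ to absorb the quadratic contribution into the negative diagonal dominance coming from $\frac{\tau_k}{2}(PD_k+D_kP)$. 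Obtaining this combined estimate with a constant independent of $h$ — so that the residual is genuinely $\OO(\tau_k)$ rather than $\OO(1)$ — is the crux of the argument; once it is in place the factor carrying $RD_k$ follows from the same computation.
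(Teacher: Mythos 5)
Your treatment of six of the eight factors is correct, and in fact tighter than the paper's own argument: for the symmetric factors $I\pm\frac{\tau_k}{2}P$, $I\pm\frac{\tau_k}{2}R$ your spectral argument gives the exact bound $\le 1$ with no remainder at all, and for the inverse factors $\l(I-\frac{\tau_k}{2}PD_{k+1}\r)^{-1}$, $\l(I-\frac{\tau_k}{2}RD_{k+1}\r)^{-1}$ your resolvent estimate $\l\|(\lambda I-A)^{-1}\r\|\le\l(\lambda-\mu(A)\r)^{-1}$ for $\lambda>\mu(A)$, combined with Lemma 3.3 and the $W^{2,\infty}$ hypothesis, is rigorous and uniform in $h$. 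The genuine gap is the remaining pair: you never prove $\l\|I+\frac{\tau_k}{2}PD_k\r\|,\l\|I+\frac{\tau_k}{2}RD_k\r\|\le 1+\OO\l(\tau_k\r)$; you explicitly label the required combined Ger\u{s}chgorin/energy cancellation ``the crux'' and leave it open, so as written the lemma is not established. The idea you are missing is that the logarithmic-norm machinery you already deployed for the inverse factors also handles the forward ones, provided you pass through the exponential rather than attack $\l\|I+\frac{\tau_k}{2}PD_k\r\|^2$ head-on: the paper writes each factor as a Pad\'{e} approximant of the matrix exponential, $I+\frac{\tau_k}{2}PD_k = E\l(\frac{\tau_k}{2}PD_k\r)+\OO\l(\tau_k^2\r)$ and $\l(I-\frac{\tau_k}{2}PD_{k+1}\r)^{-1}=E\l(\frac{\tau_k}{2}PD_{k+1}\r)+\OO\l(\tau_k^2\r)$, then applies Lemma 3.2, $\|E(\alpha A)\|\le E(\alpha\mu(A))$ --- which holds for non-symmetric $A$; that is precisely what the logarithmic norm is for --- together with Lemma 3.3's bound $\mu(PD_k)\le\frac{1}{2}\max\{|\Delta\w_k|\}+ch^2$, giving $1+\OO(\tau_k)$ in one line for all four nonlinear factors (and $1+\OO(\tau_k^2)$ for the linear ones via $\mu(P),\mu(R)\le 0$).

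That said, your objection to the naive expansion is not wrong, and it is worth stating plainly: the quadratic term you isolate satisfies $\frac{\tau_k^2}{4}\l\|D_kP^2D_k\r\|=\frac{\tau_k^2}{4}\|PD_k\|^2=\OO\l((\tau_k/h^2)^2\r)$, which under \R{cfl1} is only $\OO(1)$ uniformly in $h$ --- and exactly the same objection applies to the Pad\'{e} remainders in the paper's proof, whose ``$\OO(\tau_k^2)$'' constants implicitly involve $\|PD_k\|^2\sim h^{-4}$ and hence are not $h$-uniform. In other words, your diagnosis of where the difficulty sits is sharper than the paper's exposition, but it leaves you with only two ways to finish: either accept constants that may depend on $h$ (the reading under which the paper's exponential argument, and hence your remaining case, closes immediately), or actually carry out the $h$-uniform structural estimate you sketch. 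The proposal does neither, so the stated lemma remains unproven for the two forward nonlinear factors.
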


\begin{proof}
First, we recall that under \R{cfl1} we have the [1/0] Pad{\'e} approximation
\bb{e1}
I+\frac{\tau_k}{2}P = E\l(\frac{\tau_k}{2}P\r) + \OO\l(\tau_k^2\r),
\ee
where $E(\cdot)$ is the matrix exponential. Thus, taking the norm of both side of \R{e1} gives
\bbbb
\l\|I+\frac{\tau_k}{2}P\r\| & = & \l\|E\l(\frac{\tau_k}{2}P\r)+\OO\l(\tau_k^2\r)\r\| ~~\le~~ \l\|E\l(\frac{\tau_k}{2}P\r)\r\|+\OO\l(\tau_k^2\r)\\
& \le & E\l(\frac{\tau_k}{2}\mu(P)\r) + \OO\l(\tau_k^2\r) ~~\le~~ 1 + \OO\l(\tau_k^2\r),
\eeee
where we have appealed to the fact that $P$ is negative definite. A similar argument gives $\l\|I+\frac{\tau_k}{2}R\r\|\le 1+\OO\l(\tau_k^2\r).$ Under \R{cfl1} we have the [0/1] Pad{\'e} approximation
$$\l(I-\frac{\tau_k}{2}P\r)^{-1} = E\l(\frac{\tau_k}{2}P\r)+\OO\l(\tau_k^2\r).$$
So, as above we obtain $\l\|\l(I-\frac{\tau_k}{2}P\r)^{-1}\r\| \le 1+\OO\l(\tau_k^2\r).$ A similar argument gives $\l\|\l(I-\frac{\tau_k}{2}R\r)^{-1}\r\| \le 1+\OO\l(\tau_k^2\r).$ Lemma 3.3 gives that $\mu(PD_k),~\mu(RD_k),~\mu(PD_{k+1}),~\mu(RD_{k+1})\le\frac{1}{2}\max\{|\Delta\w_k|\}+ch^2,$ and hence, similar arguments as above will give the remaining bounds since $\w_k\in W^{2,\infty}(\Omega).$
\end{proof}

%\no{\red Should I actually show the work for calculating at least the bound for $\mu(PD_k)?$ JLP 10.8.15}\\

First we provide a proof of stability while freezing the nonlinear source term and nonlinear amplification matrices. This is equivalent to assuming that the source term and amplification matrices are constant.

\begin{thm}
Assume that the nonlinear source term and the nonlinear amplification matrices are frozen. If \R{cfl} holds, then the linearized scheme \R{factorednonlinear} is unconditionally
stable in the von Neumann sense, that is,
$$\|\textbf{z}_{k+1}\| \leq c \|\textbf{z}_{0}\|,~~~k\geq 0, $$
where $\textbf{z}_0=\vv_0-\tilde{\vv}_0$ is an initial error,
$\textbf{z}_{k+1}=\vv_{k+1}-\tilde{\vv}_{k+1}$ is the $(k+1)$th perturbed
error vector, and $c>0$ is a constant independent of $k$ and $\tau_k.$
\end{thm}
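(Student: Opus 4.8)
The plan is to identify the one-step error propagation (amplification) matrix for the frozen scheme, bound its spectral norm by $1+\OO\l(\tau_k\r)$ using Lemma 3.4, and then telescope over the time levels, converting the product of per-step growth factors into a single exponential that stays finite on a bounded time horizon.

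First I would write down the recursion satisfied by the perturbation error. Because the source term and the amplification matrices are frozen, subtracting \eqref{factorednonlinear} applied to $\tilde{\vv}_{k+1}$ from the same relation applied to $\vv_{k+1}$ cancels the common forcing $\frac{\tau_k}{2}(\f_{k+1}+\f_k)$ and replaces $D_k,D_{k+1}$ by a single constant matrix $D$. This gives
\[
\l(I-\tfrac{\tau_k}{2}P\r)\l(I-\tfrac{\tau_k}{2}R\r)\l(I-\tfrac{\tau_k}{2}PD\r)\l(I-\tfrac{\tau_k}{2}RD\r)\z_{k+1} = \l(I+\tfrac{\tau_k}{2}P\r)\l(I+\tfrac{\tau_k}{2}R\r)\l(I+\tfrac{\tau_k}{2}PD\r)\l(I+\tfrac{\tau_k}{2}RD\r)\z_{k},
\]
so that $\z_{k+1}=G_k\z_k$, where $G_k$ is the product of the four inverse factors (in reversed order) with the four additive factors. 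Under the stated CFL restriction the hypotheses of Theorem 3.1 and Corollary 3.1 hold, so all eight matrices are nonsingular and $G_k$ is well defined.

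Next I would bound $\|G_k\|$ by submultiplicativity, pushing the norm through each of the eight factors. Lemma 3.4 supplies $\l\|I+\tfrac{\tau_k}{2}P\r\|,\ \l\|I+\tfrac{\tau_k}{2}R\r\|,\ \l\|(I-\tfrac{\tau_k}{2}P)^{-1}\r\|,\ \l\|(I-\tfrac{\tau_k}{2}R)^{-1}\r\|\le 1+\OO\l(\tau_k^2\r)$ for the linear diffusion factors, and the analogous self-diffusion factors are each bounded by $1+\OO\l(\tau_k\r)$. Multiplying the eight bounds yields $\|G_k\|\le 1+C\tau_k$, where $C$ is uniform in $k$ because the hidden constant in the $\OO\l(\tau_k\r)$ terms is controlled by $\tfrac12\max\{|\Delta\w_k|\}+ch^2$ together with the assumption $\w_k\in W^{2,\infty}(\Omega)$. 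Iterating $\z_{k+1}=G_k\cdots G_0\,\z_0$ and using $1+x\le E(x)$ then gives
\[
\|\z_{k+1}\|\le \prod_{j=0}^{k}\l(1+C\tau_j\r)\|\z_0\| \le E\l(C\sum_{j=0}^{k}\tau_j\r)\|\z_0\| = E\l(C\,t_{k+1}\r)\|\z_0\|,
\]
and on a finite interval $t_{k+1}\le T$, so that $c=E(CT)$ delivers $\|\z_{k+1}\|\le c\|\z_0\|$ with $c$ independent of $k$ and $\tau_k$.

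The main obstacle is that the self-diffusion factors contribute only $1+\OO\l(\tau_k\r)$ rather than $1+\OO\l(\tau_k^2\r)$: since $PD$ and $RD$ need not be negative semidefinite, their logarithmic norms from Lemma 3.3 can be strictly positive, so an individual step need not be non-expansive and a naive per-step contraction argument fails. The resolution is to absorb this first-order growth into the exponential factor $E\l(C\,t_{k+1}\r)$ and exploit the fact that the step sizes sum to the elapsed time $t_{k+1}$, which is bounded on any finite horizon; this is precisely where the variable-step structure and the $W^{2,\infty}$ regularity of the exact solution are needed to keep $C$ uniform in $k$.
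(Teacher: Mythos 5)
Your proposal is correct and follows essentially the same route as the paper's proof: rearrange \R{factorednonlinear} into the one-step error recursion, bound each of the eight factors via Lemma 3.4 (the linear factors by $1+\OO\l(\tau_k^2\r)$, the self-diffusion factors by $1+\OO\l(\tau_k\r)$), and telescope over a finite time horizon. Your use of $1+x\le E(x)$ to bound the product $\prod_{j=0}^{k}\l(1+C\tau_j\r)$ by $E\l(Ct_{k+1}\r)\le E\l(CT\r)$ is in fact a slightly cleaner version of the paper's final step, which bounds the same product by $1+C\sum_{i=0}^{k}\tau_i$ and then by $1+CT$.
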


\begin{proof}
Note that after rearranging \R{factorednonlinear} we have
\bbb
\z_{k+1} & = & \l(I-\frac{\tau_k}{2}RD_{k+1}\r)^{-1}\l(I-\frac{\tau_k}{2}PD_{k+1}\r)^{-1}\l(I-\frac{\tau_k}{2}R\r)^{-1}\l(I-\frac{\tau_k}{2}P\r)^{-1}\nnn\\
&&\times \left(I + \frac{\tau_k}{2} P \right)\left(I + \frac{\tau_k}{2} R \right)\left(I + \frac{\tau_k}{2} P D_{k} \right)\left(I + \frac{\tau_k}{2} R D_{k} \right)\z_{k}.\label{s1}
\eee
Taking the norm of both sides of \R{s1} gives
\bbb
\|\z_{k+1}\| & = & \l\|\l(I-\frac{\tau_k}{2}RD_{k+1}\r)^{-1}\l(I-\frac{\tau_k}{2}PD_{k+1}\r)^{-1}\l(I-\frac{\tau_k}{2}R\r)^{-1}\l(I-\frac{\tau_k}{2}P\r)^{-1}\r.\nnn\\
&&\l.\times \left(I + \frac{\tau_k}{2} P \right)\left(I + \frac{\tau_k}{2} R \right)\left(I + \frac{\tau_k}{2} P D_{k} \right)\left(I + \frac{\tau_k}{2} R D_{k} \right)\z_{k}\r\|\nnn\\
& \le & \l\|\l(I-\frac{\tau_k}{2}RD_{k+1}\r)^{-1}\r\|\l\|\l(I-\frac{\tau_k}{2}PD_{k+1}\r)^{-1}\r\|\l\|\l(I-\frac{\tau_k}{2}R\r)^{-1}\r\|\l\|\l(I-\frac{\tau_k}{2}P\r)^{-1}\r\|\nnn\\
&&\times \l\|\left(I + \frac{\tau_k}{2} P \right)\r\|\l\|\left(I + \frac{\tau_k}{2} R \right)\r\|\l\|\left(I + \frac{\tau_k}{2} P D_{k} \right)\r\|\l\|\left(I + \frac{\tau_k}{2} R D_{k} \right)\r\|\|\z_{k}\|\nnn\\
%& \le & \l(1+c_{1,k}\tau_k^2\r)^8\|\z_k\|\nnn\\
& \le & \l(1 + c_k\tau_k\r)\|\z_k\|,\label{s3}
\eee
where $c_k$ is a positive constant independent of $\tau_k$ and $h.$ We have also appealed to the bounds from Lemma 3.4 in the last step.
%Note that by \R{cfl} we have $\tau_k \le 1$ for all $k\ge 0.$ Thus, we may reformulate the bound \R{s2} as
%\bb{s3}
%\|\z_{k+1}\| \le \l(1+c_k\tau_k\r)\|\z_k\|.
%\ee
Now using \R{s3} we have
\bb{s4}
\|\z_{k+1}\| ~~\le~~ \l(1+c_k\tau_k\r)\|\z_k\| ~~\le~~ \prod_{i=0}^k (1+c_i\tau_i)\|\z_0\| ~~\le~~ \l(1 + C\sum_{i=0}^k\tau_i\r)\|\z_0\|,
\ee
where $c_0,c_1,\dots,c_k,~C$ are positive constants independent of $\tau_i,~i=0,\dots,k.$ Since we are on a finite time interval, say $[0,T]$ with $T<\infty,$  we may claim that $\sum_{i=0}^k \tau_k \le T.$ Thus, \R{s4} becomes
\bbb
\|\z_k\| ~~\le~~ \l(1 + C\sum_{i=0}^k\tau_i\r)\|\z_0\| ~~\le~~ (1 + CT)\|\z_0\| ~~\le~~ c\|\z_0\|,\nnn
\eee
which gives the desired stability.
\end{proof}

In an effort to show stability and convergence without freezing any of the nonlinear terms, we prove some lemmas which will provide useful bounds.

\begin{lemma}
Assume that \R{cfl1} holds. Let $\w_k$ be the exact solution to \R{nonlinearmodel} and let $\vv_k$ be the solution to \R{factorednonlinear}. If $\xx_k = \vv_k + t^*(\w_k-\vv_k)$ for some $t^*\in (0,1)$ then we have
$$\|PD(\xx_k)\|,\|RD(\xx_k)\| \le \max\l\{|\Delta \w_k|\r\} + ch^2,$$
where $D(\xx_k)=\mbox{diag}(\xx_k)$ and $c$ is a positive constant independent of $h.$
\end{lemma}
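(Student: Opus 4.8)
The plan is to recycle the argument of Lemma 3.3 almost verbatim, since $PD(\xx_k)$ and $RD(\xx_k)$ differ from $PD_k$ and $RD_k$ only in that the diagonal now carries the intermediate vector $\xx_k$ in place of $\vv_k$. The one structural fact that makes this reduction legitimate is the nonnegativity of $\xx_k$: writing $\xx_k=(1-t^*)\vv_k+t^*\w_k$ exhibits it as a convex combination of the numerical iterate $\vv_k$, which is nonnegative under \R{cfl1} by the inverse-positivity and nonnegativity established in Theorem 3.1 and Corollary 3.1 (together with the nonnegative initial data), and of the exact solution $\w_k$, which is nonnegative by the assumptions on the model. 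Hence $\xx_k\ge 0$ componentwise, which is precisely the positivity hypothesis used to discard absolute values in the Ger\u{s}chgorin step.

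As in Lemma 3.3, I would not attack the spectral norm directly but instead pass to the symmetric part, since it is really the logarithmic norm — the quantity the stability and convergence estimates ultimately consume through Lemma 3.2 — that this computation controls. (The spectral norm of $PD(\xx_k)$ is itself of order $h^{-2}$ because of the $-2\xi_{i,\ell}/h^2$ diagonal, so the $\OO(1)$ bound is to be understood in the logarithmic-norm sense of Lemma 3.3.) By Remark 3.1, $\mu(PD(\xx_k))$ is the largest eigenvalue of $(PD(\xx_k)+D(\xx_k)P)/2$; because $P=I_N\otimes T$ this symmetrized matrix is block diagonal with symmetric tridiagonal blocks that are exactly the blocks $X_\ell$ of Lemma 3.3 with the entries $u_{i,\ell}$ of $\vv_k$ replaced by the components $\xi_{i,\ell}$ of $\xx_k$, and the block for $R=T\otimes I_N$ is the same with the two mesh directions interchanged. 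Applying Ger\u{s}chgorin to each block and using $\xx_k\ge 0$ to evaluate the radii without absolute values, every eigenvalue of the $P$ part satisfies $\lambda\le(\xi_{i-1,\ell}-2\xi_{i,\ell}+\xi_{i+1,\ell})/2h^2$, a one-dimensional discrete second difference of $\xx_k$, with the $R$ part giving the analogous second difference in the other direction.

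The step I expect to carry the real weight is converting this discrete second difference of $\xx_k$ into $\Delta\w_k+\OO(h^2)$, so that the right-hand side collapses to the claimed $\OO(1)$ quantity $\max\{|\Delta\w_k|\}+ch^2$. Writing $\xx_k=\w_k-(1-t^*)(\w_k-\vv_k)$, the second difference splits into the discrete second difference of the exact solution, which equals $u_{xx}(x_i,y_\ell,t_k)+\OO(h^2)$ by Taylor expansion using $\w_k\in W^{2,\infty}(\Omega)$, plus a multiple of the discrete second difference of the error $\w_k-\vv_k$. This error term is the delicate one, since a crude estimate of a discrete Laplacian costs a factor $h^{-2}$; the plan is to control it through a discrete $W^{2,\infty}$-type bound on the iterate (equivalently, through the closeness of $\vv_k$ to $\w_k$), absorbing its contribution into the $ch^2$ remainder and into the slightly enlarged constant in front of $\max\{|\Delta\w_k|\}$, which accounts for the absence of the factor $\tfrac12$ present in Lemma 3.3. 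Collecting the estimates from both mesh directions then yields the stated bounds for $PD(\xx_k)$ and $RD(\xx_k)$ simultaneously.
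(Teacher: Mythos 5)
Your skeleton is the paper's own: exploit the block structure of $PD(\xx_k)$, apply Ger\u{s}chgorin to the tridiagonal blocks, use nonnegativity of $\xx_k$ (a convex combination of the nonnegative iterate $\vv_k$ and the nonnegative exact solution $\w_k$) to evaluate the radii without absolute values, and split the resulting discrete second difference along $\xx_k=(1-t^*)\vv_k+t^*\w_k$. Where you genuinely diverge is the symmetrization. The paper does \emph{not} pass to the logarithmic norm here: it applies Ger\u{s}chgorin to the non-symmetric blocks of $PD(\xx_k)$ itself, obtains the one-sided bound $\lambda\le(\xi_{i-1,\ell}-2\xi_{i,\ell}+\xi_{i+1,\ell})/h^2$ (the full second difference, which is why the statement carries no factor $\tfrac{1}{2}$), claims a matching lower bound ``by similar work,'' and concludes the spectral-norm inequality as written. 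Your suspicion about that conclusion is correct: the other branch of the Ger\u{s}chgorin inequality rearranges to $\lambda\ge-(\xi_{i-1,\ell}+2\xi_{i,\ell}+\xi_{i+1,\ell})/h^2\approx-4\xi_{i,\ell}/h^2$, not to $-(w_{xx}+\OO(h^2))$, and in any case $\|PD(\xx_k)\|\ge 2\max_{i,\ell}\xi_{i,\ell}/h^2$ from the diagonal entries alone, so an $\OO(1)$ spectral-norm bound is unattainable for solutions of unit size. In other words, your logarithmic-norm reading is not a restatement of the paper's proof but a repair of its statement. Note, however, that Theorem 3.3 invokes Lemma 3.5 precisely as a spectral-norm bound --- to declare the $\tau_k^2\,PD(\xx)RD(\xx)$ and $\tau_k^2\,PD(R\xx)$ terms $\OO(\tau_k^2)$ --- and a logarithmic-norm bound cannot do that job; your reinterpretation relocates that difficulty rather than removing it.

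The step you yourself flag as carrying ``the real weight'' is where your proposal stops being a proof. To bound the discrete second difference of $\vv_k$ (equivalently, of the error $\w_k-\vv_k$) you would need a discrete $W^{2,\infty}$-type bound on the numerical iterate, uniform in $h$ and $k$; no such bound is established anywhere in the paper, and deriving it from ``closeness of $\vv_k$ to $\w_k$'' is circular, since the convergence result (Theorem 3.4) itself cites Lemma 3.5. Moreover, even granting $\|\w_k-\vv_k\|_\infty=\OO(\tau^2+h^2)$, dividing a second difference of the error by $h^2$ yields only $\OO(1+\tau^2/h^2)$, which can be neither absorbed into the $ch^2$ remainder nor rewritten as a constant multiple of $\max\{|\Delta\w_k|\}$, as your plan requires. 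For what it is worth, the paper does not fill this hole either: it simply writes $[(v_{i-1,\ell})_k-2(v_{i,\ell})_k+(v_{i+1,\ell})_k]/h^2=w_{xx}|_{(x_i,y_\ell,t_k)}+\OO(h^2)$, identifying the discrete second difference of the computed solution with the second derivative of the exact solution by fiat. So you have correctly localized the weak point of the lemma, but your plan for that step would not survive being written out in full.
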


\begin{proof}
We first consider $PD(\xx_k).$ Note that $PD(\xx_k) = \mbox{diag}(X_1,\dots,X_N),$ where
$$(X_\ell)_{i,j} = \l\lbrace\begin{array}{cl}
(\xi_{i-1,\ell})_k/h^2, & i-j=1,\\
-2(\xi_{i,\ell})_k/h^2, & i-j=0,\\
(\xi_{i+1,\ell})_k/h^2, & i-j=-1,\\
0, & \mbox{otherwise}.
\end{array}\r.$$
Further, we have $\|PD(\xx_k)\| \le \max_{\ell=1,\dots,N}\|X_\ell\|.$ So, by applying Ger\u{s}chgorin's circle theorem we may obtain the following relation for $\|X_\ell\|$
$$\l|\lambda + 2(\xi_{i,\ell})_k/h^2\r| ~~~\le~~~ \l|(\xi_{i-1,\ell})_k/h^2\r|+\l|(\xi_{i+1,\ell})_k/h^2\r|.$$
Using $\xx_k = \vv_k + t^*(\w_k-\vv_k)$ and that $\vv_k$ is nonnegative under \R{cfl}, we have
\bbb
&&\l|\lambda + \frac{2\l[(1-t^*)(v_{i,\ell})_k + t^*(w_{i,\ell})_k\r]}{h^2}\r| \le \frac{(1-t^*)(v_{i-1,\ell})_k + t^*(w_{i-1,\ell})_k + (1-t^*)(v_{i+1,\ell})_k + t^*(w_{i+1,\ell})_k}{h^2}.\nnn\\\label{i1}
\eee
We consider the two cases in \R{i1}. First, we consider
$$\lambda + \frac{2\l[(1-t^*)(v_{i,\ell})_k + t^*(w_{i,\ell})_k\r]}{h^2} \le \frac{(1-t^*)(v_{i-1,\ell})_k + t^*(w_{i-1,\ell})_k + (1-t^*)(v_{i+1,\ell})_k + t^*(w_{i+1,\ell})_k}{h^2}$$
and after rearrangement we have
\bbb
\lambda & \le & \frac{1}{h^2}\l[(1-t^*)(v_{i-1,\ell})_k + t^*(w_{i-1,\ell})_k - 2\l[(1-t^*)(v_{i,\ell})_k + t^*(w_{i,\ell})_k\r] + (1-t^*)(v_{i+1,\ell})_k + t^*(w_{i+1,\ell})_k\r]\nnn\\
& = & \frac{1-t^*}{h^2}\l[(v_{i-1,\ell})_k- 2(v_{i,\ell})_k+(v_{i+1,\ell})_k\r] + \frac{t^*}{h^2}\l[(w_{i-1,\ell})_k- 2(w_{i,\ell})_k+(w_{i+1,\ell})_k\r]\nnn\\
& = & (1-t^*)\l(w_{xx}\bigg|_{(x_i,y_\ell,t_k)}+\OO\l(h^2\r)\r)+t^*\l(w_{xx}\bigg|_{(x_i,y_\ell,t_k)}+\OO\l(h^2\r)\r)\nnn\\
& = & w_{xx}\bigg|_{(x_i,y_\ell,t_k)}+\OO\l(h^2\r).\label{i2}
\eee
We now consider the other case of the inequality from \R{i1}
$$\frac{-\l[(1-t^*)(v_{i-1,\ell})_k + t^*(w_{i-1,\ell})_k + (1-t^*)(v_{i+1,\ell})_k + t^*(w_{i+1,\ell})_k\r]}{h^2} ~~~\le~~~ \lambda + \frac{2\l[(1-t^*)(v_{i,\ell})_k + t^*(w_{i,\ell})_k\r]}{h^2}$$
and by work similar to that used to obtain \R{i2}, we obtain
\bb{i3}
-\l(w_{xx}\bigg|_{(x_i,y_\ell,t_k)}+\OO\l(h^2\r)\r) \le \lambda.
\ee
Combining \R{i2} and \R{i3} gives
$$\|X_\ell\| \le |w_{xx}|~\bigg|_{(x_i,y_\ell,t_k)} + ch^2,$$
where $c$ is some positive constant independent of $h.$ Thus, we have
$$\|PD(\xx_k)\| \le \max_{\ell=1,\dots,N}\|X_\ell\| \le \max\l\{|(\w_{xx})_k|\r\}+ch^2.$$
Similar arguments give
$$\|RD(\xx_k)\| \le \max\l\{|(\w_{yy})_k|\r\}+ch^2,$$
and thus, combining the results, we have
$$\|PD(\xx_k)\|,\|RD(\xx_k)\| \le \max\l\{|\Delta \w_k|\r\}+ch^2,$$
as desired.
\end{proof}

\begin{lemma}
Assume that \R{cfl1} holds. Let $\w_k$ be the solution to \R{nonlinearmodel} and let $\vv_k$ be the solution to \R{factorednonlinear}. If $\xx_k = \vv_k + t^*(\w_k-\vv_k)$ for some $t^*\in(0,1)$ then we have
$$\|PD(R\xx_k)\| \le \max\{|\Delta^2\w_k|\}+ch^2,$$
where $D(R\xx_k) = \mbox{diag}(R\xx_k)$ and $c$ is a positive constant independent of $h.$
\end{lemma}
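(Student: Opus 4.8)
The plan is to follow the template established in the proof of Lemma 3.5, exploiting the block structure induced by the Kronecker factorization $P = I_N\otimes T$. Since $P$ is block diagonal and $D(R\xx_k)=\mbox{diag}(R\xx_k)$ is diagonal, the product is again block diagonal, $PD(R\xx_k) = \mbox{diag}(X_1,\dots,X_N)$, where $X_\ell = T\,\mbox{diag}\left((R\xx_k)_{1,\ell},\dots,(R\xx_k)_{N,\ell}\right)$ and $(R\xx_k)_{i,\ell}$ denotes the entry of $R\xx_k$ attached to the grid point $(x_i,y_\ell)$. Hence $\|PD(R\xx_k)\|\le\max_{\ell=1,\dots,N}\|X_\ell\|$, and it suffices to bound a single block. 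The essential observation is that, because $R = T\otimes I_N$ is the $y$-direction second-difference operator, each diagonal entry is itself a discrete second difference, $(R\xx_k)_{i,\ell} = \left[(\xi_{i,\ell-1})_k - 2(\xi_{i,\ell})_k + (\xi_{i,\ell+1})_k\right]/h^2$, so that after left multiplication by $T$ the block $X_\ell$ encodes a mixed fourth-order difference in $x$ and $y$, which is exactly what should reproduce the $\Delta^2\w_k$ on the right-hand side.

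First I would record the tridiagonal structure of $X_\ell$, namely $(X_\ell)_{i,i} = -2(R\xx_k)_{i,\ell}/h^2$ and $(X_\ell)_{i,i\pm 1} = (R\xx_k)_{i\pm 1,\ell}/h^2$, and apply Ger\u{s}chgorin's circle theorem row by row, exactly as in Lemma 3.5. Next I would insert the decomposition $\xx_k = \vv_k + t^*(\w_k - \vv_k)$ and split every difference quotient into its $\vv_k$-contribution and its $\w_k$-contribution, weighted by $(1-t^*)$ and $t^*$. Taylor expanding the $\w_k$-terms about $(x_i,y_\ell,t_k)$ should collapse the iterated second difference into $w_{xxyy}\big|_{(x_i,y_\ell,t_k)} + \OO(h^2)$, a component of $\Delta^2\w_k$, with the $\vv_k$-terms handled exactly as in Lemma 3.5. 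Bounding $|w_{xxyy}|$ by $\max\{|\Delta^2\w_k|\}$ and taking the maximum over $\ell$ would then yield the stated estimate, provided $\w_k$ has bounded fourth derivatives (e.g. $\w_k\in W^{4,\infty}(\Omega)$), the natural analogue of the $W^{2,\infty}$ hypothesis underlying Lemma 3.4.

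The main obstacle is the loss of the sign information that drove the proof of Lemma 3.5. There the diagonal entries were the solution values $(\xi_{i,\ell})_k\ge 0$ under \R{cfl}, and this nonnegativity was precisely what allowed the absolute values appearing in the Ger\u{s}chgorin radius to be discarded, so that the upper and lower bounds assembled into a clean signed second difference. Here the diagonal entries are $(R\xx_k)_{i,\ell}\approx w_{yy}$, which need not be of one sign, so the radius $\left(|(R\xx_k)_{i-1,\ell}| + |(R\xx_k)_{i+1,\ell}|\right)/h^2$ cannot be simplified in the same way. I would therefore treat the two Ger\u{s}chgorin inequalities separately, in each case performing the $t^*$-splitting and Taylor expansion before estimating, and lean on the smoothness of $\w_k$ rather than on positivity to extract the $w_{xxyy}+\OO(h^2)$ behaviour. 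This sign difficulty, together with the attendant need for a fourth-order smoothness hypothesis, is the step I expect to demand the most care.
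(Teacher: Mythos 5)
Your reconstruction of the intended argument is accurate as far as the paper is concerned: the paper's entire ``proof'' of this lemma is the sentence deferring to Lemma 3.5, so the block decomposition $PD(R\xx_k)=\mbox{diag}(X_1,\dots,X_N)$, the Ger\u{s}chgorin step, the $t^*$-splitting, and the $W^{4,\infty}$ hypothesis are exactly what is being invoked. However, the obstacle you flag in your final paragraph is not a step ``demanding care'' --- it is fatal, and your proposed remedy (lean on smoothness of $\w_k$ instead of positivity) cannot close it. Write $g_j=(R\xx_k)_{j,\ell}$, which under the same consistency reasoning used in Lemma 3.5 equals $w_{yy}(x_j,y_\ell,t_k)+\OO\left(h^2\right)$. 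Ger\u{s}chgorin gives
\[
\lambda \;\le\; \frac{-2g_i+|g_{i-1}|+|g_{i+1}|}{h^2},
\]
and smoothness controls only the \emph{signed} difference $g_{i-1}-2g_i+g_{i+1}=\OO\left(h^2\right)$; it says nothing about the combination with absolute values. Wherever $g_i<0$ (i.e.\ wherever the solution is concave in $y$), the right-hand side is $\approx 4|g_i|/h^2=\OO\left(h^{-2}\right)$, no matter how smooth $\w_k$ is. Moreover this is not an artifact of Ger\u{s}chgorin being crude: the spectral norm dominates every row norm, and row $i$ of $X_\ell$ is $(g_{i-1},-2g_i,g_{i+1})/h^2$, so $\|PD(R\xx_k)\|\ge 2\max_{i}|g_i|/h^2$. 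Hence the stated inequality is actually false whenever $w_{yy}\not\equiv 0$: for the paper's own Example 1 solution $w=\sin(\pi x)\sin(\pi y)e^{-2\pi^2 t}$ one has $w_{yy}=-\pi^2 w<0$ on the interior, so $\|PD(R\xx_k)\|$ grows like $h^{-2}$ while $\max\{|\Delta^2\w_k|\}$ stays bounded. No proof strategy can establish the lemma as stated; the best available bound is $\|PD(R\xx_k)\|\le C/h^2$, which (combined with \R{cfl1}, which keeps $\tau_k/h^2$ bounded) is in fact all that the application in Theorem 3.3 requires, since there the term enters multiplied by $\tau_k^2$.

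One sharpening of your diagnosis of Lemma 3.5 is also in order: positivity controls only the \emph{right} edge of each Ger\u{s}chgorin disk. The left edge is $(-2g_i-|g_{i-1}|-|g_{i+1}|)/h^2\approx -4g_i/h^2$ even when all $g_j\ge 0$, so the second half of the paper's Lemma 3.5 proof (the lower bound asserted there) silently flips a sign, and Lemma 3.5 fails for the same reason: take the solution nearly constant and positive, so that $PD(\xx_k)\approx P$, whose norm is $\approx 4/h^2$. In other words, the wall you hit is a genuine defect in the paper's chain of lemmas rather than a gap in your ingenuity --- positivity rescues the log-norm bound of Lemma 3.3, which is one-sided, but not the two-sided spectral-norm claims of Lemmas 3.5 and 3.6. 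Still, judged as a proof proposal for the statement as written, your plan does not and cannot succeed, and the honest conclusion is that the statement needs to be weakened to an $\OO\left(h^{-2}\right)$ bound (or the norm reinterpreted) before any proof can be given.
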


\begin{proof}
This is shown in a similar fashion as the previous lemma and is removed for brevity.

%\no{\red Do you think I need to add this proof, or can I just say that it follows in a similar manner to the one above? JLP 10.9.2015}
\end{proof}

\begin{thm}
Let $\tau_\ell,~\ell=0,1,\dots,k$ be sufficiently small. If
\bb{cfl2}
\frac{\tau_k}{h^2} ~~<~~ \frac{1}{4\max\{1,\max_{i=1,\dots,N^2}\{(\vv_k)_i,(\tilde{\vv}_k)_i\}\}}
\ee
holds, $\w_k\in W^{4,\infty}(\Omega),$ for $k\ge 0,$ where $\w_k$ is the true solution to \R{nonlinearmodel}, and $\|\f_v(\xx)\|\le K<\infty,$ for $\xx\in\mathbb{R}^{N^2},$ then the scheme \R{factorednonlinear} is unconditionally
stable in the von Neumann sense, that is,
$$\|\textbf{z}_{k+1}\| \leq \tilde{c} \|\textbf{z}_{0}\|,~~~k\geq 0, $$
where $\textbf{z}_0=\vv_0-\tilde{\vv}_0$ is an initial error,
$\textbf{z}_{k+1}=\vv_{k+1}-\tilde{\vv}_{k+1}$ is the $(k+1)$st perturbed
error vector, and $\tilde{c}>0$ is a constant independent of $k$ and $\tau_k.$
\end{thm}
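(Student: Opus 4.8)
The plan is to mirror the frozen-coefficient argument of Theorem 3.2, but now to carry the dependence of the amplification matrices on the solution through the error recursion instead of discarding it. I would first write the scheme \R{factorednonlinear} twice --- once for the trajectory $\vv_k$, with $D_k=\mbox{diag}(\vv_k)$ and $D_{k+1}$, and once for the perturbed trajectory $\tilde{\vv}_k$, with $\tilde{D}_k=\mbox{diag}(\tilde{\vv}_k)$ and $\tilde{D}_{k+1}$ --- and then subtract. The feature absent from Theorem 3.2 is that the matrix products no longer factor out of the difference, since $D_k\neq\tilde{D}_k$. The key to linearizing is that every nonlinear term is an elementwise square, so that $D_k\vv_k-\tilde{D}_k\tilde{\vv}_k=\mbox{diag}(\vv_k+\tilde{\vv}_k)\z_k=2D(\xx_k)\z_k$, where $\xx_k=(\vv_k+\tilde{\vv}_k)/2$ is an intermediate point of the kind controlled in Lemmas 3.5 and 3.6. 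Applying the same device to the quadratic cross terms, such as $PD_kRD_k\vv_k$, produced by the expansion at the end of Section 2 generates the higher nonlinear factors $D(R\xx_k)$ bounded by Lemma 3.6, which is exactly why the regularity hypothesis strengthens from $W^{2,\infty}(\Omega)$ to $W^{4,\infty}(\Omega)$.

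I would then collect norm estimates term by term. The purely linear factors $I\pm\frac{\tau_k}{2}P$, $I\pm\frac{\tau_k}{2}R$ and their inverses are bounded by $1+\bigo{\tau_k^2}$ through Lemma 3.4; the factors carrying a single diagonal are bounded by combining the $1+\bigo{\tau_k}$ estimates of Lemma 3.4 with Lemma 3.5, and the $\bigo{\tau_k^2}$ cross terms by Lemma 3.6. The inverse of the left-hand product is handled by bounding each inverse factor, Corollary 3.1 guaranteeing nonsingularity under \R{cfl2}, while the difference between the two left-hand products --- forced by $D_{k+1}\neq\tilde{D}_{k+1}$ --- is converted into a factor of $\z_k$ via the resolvent identity together with the same elementwise-square linearization; the Euler-step expansion of $D_{k+1}$ then contributes only further $\bigo{\tau_k^2}$ diagonal pieces, again absorbed by Lemmas 3.5 and 3.6. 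The source difference $\f_k-\tilde{\f}_k$ is controlled by $K\|\z_k\|$ using the hypothesis $\|\f_v\|\le K$.

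Assembling these estimates, every matrix factor is $1+\bigo{\tau_k}$ and each genuinely nonlinear or source contribution carries at least one explicit factor of $\tau_k$ times a constant depending only on $\max\{|\Delta\w_k|\}$, $\max\{|\Delta^2\w_k|\}$, and $K$, so the recursion should collapse to $\|\z_{k+1}\|\le(1+\tilde{c}_k\tau_k)\|\z_k\|$ with $\tilde{c}_k$ uniform in $k$ and independent of $\tau_k$; here the boundedness of $\w_k$ in $W^{4,\infty}(\Omega)$ uniformly on $[0,T]$ is what keeps $\tilde{c}_k$ uniform. Telescoping exactly as in \R{s4} and using $\sum_{i=0}^k\tau_i\le T$ then gives $\|\z_{k+1}\|\le\prod_{i=0}^k(1+\tilde{c}_i\tau_i)\|\z_0\|\le(1+\tilde{C}T)\|\z_0\|\le\tilde{c}\|\z_0\|$. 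I expect the principal obstacle to be the bookkeeping of the difference of the two left-hand matrix products and of the $D_{k+1}$ Euler-step terms: one must check that after the elementwise-square linearization and the resolvent identity every resulting term genuinely retains a factor of $\tau_k$, and, most delicately, that the cubic terms bounded only through the $W^{4,\infty}$ estimate of Lemma 3.6 do not introduce an uncontrolled $h^{-2}$ growth that would violate the uniformity of $\tilde{c}_k$.
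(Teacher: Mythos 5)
Your plan follows essentially the same route as the paper's proof: subtract the perturbed and unperturbed schemes, linearize the quadratic nonlinearities about points on the segment joining the two trajectories, bound every resulting matrix factor by $1+\bigo{\tau_k}$ using Lemmas 3.4--3.6, Corollary 3.1, and the hypothesis $\|\f_v\|\le K$, and telescope the recursion $\|\z_{k+1}\|\le(1+c_k\tau_k)\|\z_k\|$ over $[0,T]$. The only real difference is cosmetic: you linearize via the exact difference-of-squares identity $D(\vv)\vv-D(\tilde{\vv})\tilde{\vv}=2D\left((\vv+\tilde{\vv})/2\right)\z$, whereas the paper applies the mean value theorem to the composite maps $g$ and $h$ and then factors the Jacobians $g_v$, $h_v$; since the nonlinearity is quadratic, the mean-value point is exactly your midpoint, so the two linearizations coincide.
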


\begin{proof}
Let $\vv_k$ be a solution and $\tilde{\vv}_k$ be a perturbed solution to \R{factorednonlinear}. We first note that if \R{cfl2} holds, then we have the results which follow from \R{cfl1} holding. Let
$$\Phi_k = \left(I - \frac{\tau_k}{2} P \right)\left(I - \frac{\tau_k}{2} R \right)\quad\mbox{and}\quad \Psi_k = \left(I + \frac{\tau_k}{2} P \right)\left(I + \frac{\tau_k}{2} R \right),$$
then we have
\bbb
\Phi_k\left(I - \frac{\tau_k}{2} P D(\vv_{k+1}) \right)\left(I - \frac{\tau_k}{2} R D(\vv_{k+1}) \right)\vv_{k+1} &=& \Psi_k\left(I + \frac{\tau_k}{2} P D(\vv_{k}) \right)\left(I + \frac{\tau_k}{2} R D(\vv_{k}) \right)\vv_{k}\nnn\\
&&+\frac{\tau_k}{2}(\f(\vv_{k+1})+\f(\vv_k)),\nnn\\
\label{a1}
\eee
and
\bbb
\Phi_k\left(I - \frac{\tau_k}{2} P D(\tilde{\vv}_{k+1}) \right)\left(I - \frac{\tau_k}{2} R D(\tilde{\vv}_{k+1}) \right)\tilde{\vv}_{k+1} &=& \Psi_k\left(I + \frac{\tau_k}{2} P D(\tilde{\vv}_{k}) \right)\left(I + \frac{\tau_k}{2} R D(\tilde{\vv}_{k}) \right)\tilde{\vv}_{k}\nnn\\
&&+\frac{\tau_k}{2}(\f(\tilde{\vv}_{k+1})+\f(\tilde{\vv}_k)),\nnn\\
\label{a2}
\eee
Letting $\z_k = \vv_k - \tilde{\vv}_k$ and subtracting \R{a2} from \R{a1}, we have
\bbb
\Phi_k\l(g(\vv_{k+1})-g(\tilde{\vv}_{k+1})\r) = \Psi_k\l(h(\vv_k)-h(\tilde{\vv}_k)\r)+\frac{\tau_k}{2}(\f(\vv_{k+1})-\f(\tilde{\vv}_{k+1}))+\frac{\tau_k}{2}(\f(\vv_{k})-\f(\tilde{\vv}_{k})),\label{a3}
\eee
where
$$g(\vv_{k+1}) = \l(I-\frac{\tau_k}{2}PD(\vv_{k+1})\r)\l(I-\frac{\tau_k}{2}RD(\vv_{k+1})\r)\vv_{k+1},\quad h(\vv_{k}) = \l(I+\frac{\tau_k}{2}PD(\vv_{k})\r)\l(I+\frac{\tau_k}{2}RD(\vv_{k})\r)\vv_{k}.$$
Note that, for $\tau_k$ sufficiently small we have $g(\vv_{k+1}) = g(\tilde{\vv}_{k+1}) + g_v(\xx_{k+1}^{(1)})\z_{k+1}$ for some $\xx_{k+1}^{(1)}\in\mathcal{L}(\vv_{k+1};\tilde{\vv}_{k+1}),$ where $\mathcal{L}(\vv_{k+1};\tilde{\vv}_{k+1})$ is the line segment connecting $\vv_{k+1}$ to $\tilde{\vv}_{k+1}$ in $\RR^{N^2}.$ Similarly we have that $h(\vv_k) = h(\tilde{\vv}_k) + h_v(\xx_k^{(2)})\z_k$ for some $\xx_k^{(2)}\in\mathcal{L}(\vv_k;\tilde{\vv}_k).$ Note that this means $\xx_{k+1}^{(1)} = \tilde{\vv}_{k+1} + t^*(\vv_{k+1} - \tilde{\vv}_{k+1})$ for some $t^*\in (0,1)$ and $\xx_k^{(2)} = \tilde{\vv}_k + s^*(\vv_k - \tilde{\vv}_k)$ for some $s^*\in(0,1).$ Thus, \R{a3} becomes
\bbb
\Phi_kg_v(\xx_{k+1}^{(1)})\z_{k+1} & = & \Psi_kh_v(\xx_k^{(2)})\z_k+\frac{\tau_k}{2}(\f(\vv_{k+1})-\f(\tilde{\vv}_{k+1}))+\frac{\tau_k}{2}(\f(\vv_{k})-\f(\tilde{\vv}_{k})).\label{a4}
\eee
Further, we have $\f(\vv_{k}) = \f(\tilde{\vv}_{k}) + \f_v(\xx_{k}^{(3)})\z_{k}$ for some $\xx_{k}^{(3)}\in\mathcal{L}(\vv_k;\tilde{\vv}_k).$ Thus, \R{a4} becomes
\bbb
\Phi_kg_v(\xx_{k+1}^{(1)})\z_{k+1} & = & \Psi_kh_v(\xx_k^{(2)})\z_k+\frac{\tau_k}{2}\f_v(\xx_{k+1}^{(3)})\z_{k+1}+\frac{\tau_k}{2}\f_v(\xx_k^{(3)})\z_k\nnn\\
%\Phi_kg_v(\xx_{k+1}^{(1)})\z_{k+1}-\frac{\tau_k}{2}\f_v(\xx_{k+1}^{(3)})\z_{k+1} & = & \Psi_kh_v(\xx_k^{(2)})\z_k+\frac{\tau_k}{2}\f_v(\xx_k^{(3)})\z_k\nnn\\
\l(\Phi_kg_v(\xx_{k+1}^{(1)})-\frac{\tau_k}{2}\f_v(\xx_{k+1}^{(3)})\r)\z_{k+1} & = & \l(\Psi_kh_v(\xx_k^{(2)})+\frac{\tau_k}{2}\f_v(\xx_k^{(3)})\r)\z_k\nnn
\eee
and solving for $\z_{k+1}$ gives
\bbb
\z_{k+1} & = & \l(\Phi_kg_v(\xx_{k+1}^{(1)})-\frac{\tau_k}{2}\f_v(\xx_{k+1}^{(3)})\r)^{-1}\l(\Psi_kh_v(\xx_k^{(2)})+\frac{\tau_k}{2}\f_v(\xx_k^{(3)})\r)\z_k\nnn\\
%\z_{k+1} & = & \l[\Phi_kg_v(\xx_{k+1}^{(1)})\l(I-\frac{\tau_k}{2}\l(\Phi_kg_v(\xx_{k+1}^{(1)})\r)^{-1}\f_v(\xx_{k+1}^{(3)})\r)\r]^{-1}\nnn\\
%&&~~~~~\times\l(\Psi_kh_v(\xx_k^{(2)})\r)\l(I+\frac{\tau_k}{2}\l(\Psi_kh_v(\xx_k^{(2)})\r)^{-1}\f_v(\xx_k^{(3)})\r)\z_k\nnn\\
& = & \l(g_v(\xx_{k+1}^{(1)})\r)^{-1}\Phi_k^{-1}\l(I-\frac{\tau_k}{2}\l(g_v(\xx_{k+1}^{(1)})\r)^{-1}\Phi_k^{-1}\f_v(\xx_{k+1}^{(3)})\r)^{-1}\nnn\\
&&~~~~~\times\l(\Psi_kh_v(\xx_k^{(2)})\r)\l(I+\frac{\tau_k}{2}\l(h_v(\xx_k^{(2)})\r)^{-1}\Psi_k^{-1}\f_v(\xx_k^{(3)})\r)\z_k.\label{a6}
\eee
Taking the norm of both sides of \R{a6} gives
\bbb
\|\z_{k+1}\| & = & \l\|\l(g_v(\xx_{k+1}^{(1)})\r)^{-1}\Phi_k^{-1}\l(I-\frac{\tau_k}{2}\l(g_v(\xx_{k+1}^{(1)})\r)^{-1}\Phi_k^{-1}\f_v(\xx_{k+1}^{(3)})\r)^{-1}\r.\nnn\\
&&~~~~~\l.\times\l(\Psi_kh_v(\xx_k^{(2)})\r)\l(I+\frac{\tau_k}{2}\l(h_v(\xx_k^{(2)})\r)^{-1}\Psi_k^{-1}\f_v(\xx_k^{(3)})\r)\z_k\r\|\nnn\\
& \le & \l\|\l(g_v(\xx_{k+1}^{(1)})\r)^{-1}\r\|\l\|\Phi_k^{-1}\r\|\l\|\l(I-\frac{\tau_k}{2}\l(g_v(\xx_{k+1}^{(1)})\r)^{-1}\Phi_k^{-1}\f_v(\xx_{k+1}^{(3)})\r)^{-1}\r\|\nnn\\
&&~~~~~\times\l\|\Psi_k\r\|\l\|h_v(\xx_k^{(2)})\r\|\l\|\l(I+\frac{\tau_k}{2}\l(h_v(\xx_k^{(2)})\r)^{-1}\Psi_k^{-1}\f_v(\xx_k^{(3)})\r)\r\|\|\z_k\|\nnn\\
& \le & \l(1+c_{1,k}\tau_k\r)\l\|\l(g_v(\xx_{k+1}^{(1)})\r)^{-1}\r\|\l\|\l(I-\frac{\tau_k}{2}\l(g_v(\xx_{k+1}^{(1)})\r)^{-1}\Phi_k^{-1}\f_v(\xx_{k+1}^{(3)})\r)^{-1}\r\|\nnn\\
&&~~~~~\times\l\|h_v(\xx_k^{(2)})\r\|\l\|\l(I+\frac{\tau_k}{2}\l(h_v(\xx_k^{(2)})\r)^{-1}\Psi_k^{-1}\f_v(\xx_k^{(3)})\r)\r\|\|\z_k\|,\label{a7}
\eee
by Lemma 3.4. We now show bounds for $\l\|\l(g_v(\xx_{k+1}^{(1)})\r)^{-1}\r\|$ and $\l\|h_v(\xx_k^{(2)})\r\|.$ By carefully considering the matrix-vector product and then differentiating, we observe that
\bbb
g_v(\xx_{k+1}^{(1)}) & = & \frac{\partial}{\partial \vv}\l(I-\frac{\tau_k}{2}PD(\vv)\r)\l(I-\frac{\tau_k}{2}RD(\vv)\r)\vv\bigg|_{\vv=\xx_{k+1}^{(1)}}\nnn\\
%& = & \frac{\partial}{\partial \vv}\l(I - \frac{\tau_k}{2}PD(\vv) - \frac{\tau_k}{2}RD(\vv) + \frac{\tau_k^2}{4}PD(\vv)RD(\vv)\r)\vv\bigg|_{\vv=\xx_{k+1}}\nnn\\
& = & I - \tau_kPD(\xx_{k+1}^{(1)}) - \tau_kRD(\xx_{k+1}^{(1)}) + \frac{\tau_k^2}{2}PD(\xx_{k+1}^{(1)})RD(\xx_{k+1}) + \frac{\tau_k^2}{4}PD(R\xx_{k+1}^{(1)})\nnn\\
%& = & \l(I-\tau_kPD(\xx_{k+1}^{(1)})\r)\l(I-\tau_kRD(\xx_{k+1}^{(1)})\r)\l(I+\frac{\tau_k^2}{2}PD(\xx_{k+1}^{(1)})RD(\xx_{k+1}^{(1)})\r)\nnn\\
%&&~~~\times\l(I+\frac{\tau_k^2}{4}PD(R\xx_{k+1}^{(1)})\r)+\OO\l(\tau_k^2\r)\nnn\\
& = & \l(I-\tau_kPD(\xx_{k+1}^{(1)})\r)\l(I-\tau_kRD(\xx_{k+1}^{(1)})\r) + \OO\l(\tau_k^2\r),\label{a8}
\eee
for $\tau_k$ sufficiently small, where we have appealed to the fact that $\|PD(\xx_{k+1}^{(1)})\|,\|RD(\xx_{k+1}^{(1)})\| < \infty$ by Lemma 3.5. and $\|PD(R\xx_{k+1}^{(1)})\|<\infty$ by Lemma 3.6, combined with our assumptions that $\w_k\in W^{4,\infty}$ for $k\ge 0.$ Using \R{a8} we have
$$\l(g_v(\xx_{k+1}\r)^{-1} = \l(I-\tau_kRD(\xx_{k+1})\r)^{-1}\l(I-\tau_kPD(\xx_{k+1})\r)^{-1} + \OO\l(\tau_k^2\r)$$
and then
\bbb
\l\|\l(g_v(\xx_{k+1}^{(1)})\r)^{-1}\r\| & = & \l\|\l(I-\tau_kRD(\xx_{k+1})\r)^{-1}\l(I-\tau_kPD(\xx_{k+1})\r)^{-1} + \OO\l(\tau_k^2\r)\r\|\nnn\\
& = & \l\|E(\tau_kPD(\xx_{k+1})E(\tau_kRD(\xx_{k+1}) + \OO\l(\tau_k^2\r)\r\|\nnn\\
& \le & E(\tau_k\mu(PD(\xx_{k+1}))E(\tau_k\mu(RD(\xx_{k+1})) + c_{4,k}\tau_k^2\nnn\\
& \le & 1 + c_{2,k}\tau_k,\label{a9}
\eee
where $c_{2,k},~c_{4,k}$ are positive constants independent of $h,~\tau_k,~k,$ by similar arguments to that used in Lemma 3.4. By similar arguments used to obtain \R{a8}, we may obtain
$$h_v(\xx_{k+1}^{(2)})=\l(I+\tau_kPD(\xx_k^{(2)})\r)\l(I+\tau_kRD(\xx_k^{(2)})\r)+\OO\l(\tau_k^2\r)$$
and using similar arguments as those used to obtain \R{a8} we may obtain
\bb{a9}
\l\|h_v(\xx_{k+1}^{(2)})\r\| \le 1 + c_{3,k}\tau_k,
\ee
where $c_{3,k}$ is a positive constant independent of $h,~\tau_k,~\mbox{and}~ k.$ Combining
\R{a7}-\R{a9} we now have
\bbbb
\|\z_{k+1}\| &\le& \prod_{i=1}^3\l(1+c_{i,k}\tau_k\r)\l\|\l(I-\frac{\tau_k}{2}\l(g_v(\xx_{k+1}^{(1)})\r)^{-1}\Phi_k^{-1}\f_v(\xx_{k+1}^{(3)})\r)^{-1}\r\|\l\|\l(I+\frac{\tau_k}{2}\l(h_v(\xx_k^{(2)})\r)^{-1}\Psi_k^{-1}\f_v(\xx_k^{(3)})\r)\r\|\|\z_k\|\\
&\le& \l(1+c_{5,k}\tau_k\r)\l\|\l(I-\frac{\tau_k}{2}\l(g_v(\xx_{k+1}^{(1)})\r)^{-1}\Phi_k^{-1}\f_v(\xx_{k+1}^{(3)})\r)^{-1}\r\|\l\|\l(I+\frac{\tau_k}{2}\l(h_v(\xx_k^{(2)})\r)^{-1}\Psi_k^{-1}\f_v(\xx_k^{(3)})\r)\r\|\|\z_k\|.
\eeee
For $\tau_k$ sufficiently small we have
\bbb
\|\z_{k+1}\| & \le & \l(1+c_{5,k}\tau_k\r)\l\|E\l(\frac{\tau_k}{2}\l(g_v(\xx_{k+1}^{(1)})\r)^{-1}\Phi_k^{-1}\f_v(\xx_{k+1}^{(3)})\r)+\OO\l(\tau_k^2\r)\r\|\nnn\\
&&~~~\times\l\|E\l(\frac{\tau_k}{2}\l(h_v(\xx_k^{(2)})\r)^{-1}\Psi_k^{-1}\f_v(\xx_k^{(3)})\r)+\OO\l(\tau_k^2\r)\r\|\|\z_k\|\nnn\\
& \le & \l(1+c_{5,k}\tau_k\r)\l[E\l(\frac{\tau_k}{2}\l[ \l\| \l(g_v(\xx_{k+1}^{(1)})\r)^{-1} \r\|\l\|\Phi_k^{-1}\r\| \l\|\f_v(\xx_{k+1}^{(3)})\r\|+c_{6,k}\tau_k^2\r.\r.\r.\nnn\\
&&~~~\l.\l.\l. +\l\|\l(h_v(\xx_k^{(2)})\r)^{-1}\r\|\l\| \Psi_k^{-1}\r\| \l\|\f_v(\xx_k^{(3)})\r\|\r]\r)+c_{7,k}\tau_k^2\r]\|\z_k\|\nnn\\
& \le & \l(1+c_{5,k}\tau_k\r)\l[E\l(\frac{\tau_kK}{2}\l[ \l\| \l(g_v(\xx_{k+1}^{(1)})\r)^{-1} \r\|\l\|\Phi_k^{-1}\r\| +\l\|\l(h_v(\xx_k^{(2)})\r)^{-1}\r\|\l\| \Psi_k^{-1}\r\|\r]\r)+c_{8,k}\tau_k^2\r]\|\z_k\|\nnn\\
& \le & \l(1+c_{5,k}\tau_k\r)\l[E\l(\frac{\tau_kK}{2}\l(2+c_{9,k}\tau_k\r)\r)+c_{8,k}\tau_k^2\r]\|\z_k\| ~~\le~~ \l(1+c_k\tau_k\r)\|\z_k\|,\label{a10}
\eee
where $c_{i,k},~i=5,6,7,8,9,~c_k$ are positive constants independent of $h,~\tau_k,$ and $k.$ Considering \R{a10}, recursively, we have
$$\|\z_{k+1}\| ~~\le~~ \prod_{i=0}^k\l(1+c_i\tau_i\r)\|\z_0\| ~~\le~~ \l(1 + C\sum_{i=0}^k\tau_i\r)\|\z_0\| ~~\le~~ \l(1 + CT\r)\|\z_0\| ~~\le~~ \tilde{c}\|\z_0\|,$$
where $c_i,~i=0,\dots,k,~C,~\tilde{c}$ are constants independent of $h,~\tau_i,~i=0,\dots,k,$ and the step $k,$ and $\sum_{i=0}^k\tau_i \le T < \infty.$ This gives the desired stability.
\end{proof}

\begin{rem}
It should be noted that since $\xx_k^{(i)}\in\mathcal{L}(\vv_k;\tilde{\vv}_k),~i=1,2,3,$ then we have that any matrices involving $\xx_k^{(i)}$ will also satisfy \R{cfl2}, hence the positivity of our solution is not affected by the previous analysis. Further, Corollary 3.1 will apply to the above.
\end{rem}

%In the above theorem, stability has been guaranteed without freezing any of the terms, in particular the reactive term. Next, we detail our theorem ensuring convergence.  However, this requires a freezing of the nonlinear reactive term. While this limits the current analysis our examples provide empirical evidence that the numerical solution is indeed convergent and at the correct second order accuracy.

In the above theorem, stability has been guaranteed without freezing any of the nonlinear terms. We now show that the conditions used to ensure stability are also sufficient to ensure convergence. This current analysis is an improvement as it considers the contributions of all terms.

\begin{thm}
Let $\tau_\ell,~\ell=0,1,\dots,k$ be sufficiently small. If
\bb{cfl3}
\frac{\tau_k}{h^2} ~~<~~ \frac{1}{4\max\{1,\max_{i=1,\dots,N^2}\{(\vv_k)_i,(\w_k)_i\}\}}
\ee
holds, $\w_k\in W^{4,\infty}(\Omega),$ for $k\ge 0,$ where $\w_k$ is the true solution to \R{nonlinearmodel}, and $\|\f_v(\xx)\|\le K<\infty,$ for $\xx\in\mathbb{R}^{N^2},$ then \R{factorednonlinear} is convergent.
\end{thm}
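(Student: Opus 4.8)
The plan is to mirror the stability argument of Theorem 3.3, replacing the perturbed iterate $\tilde{\vv}_k$ by the exact solution $\w_k$ and carrying along the consistency residual that no longer cancels. First I would observe that the numerical iterate $\vv_k$ satisfies \R{factorednonlinear} with the $\OO\l(\tau_k^3\r)$ term discarded, whereas substituting the exact solution $\w_k$ into the same factored scheme produces precisely an $\OO\l(\tau_k^3\r)$ local truncation residual; this is the consistency of the method, already visible in the equivalence computation at the end of Section 2, and the Euler updates used for $D_{k+1}$ and $\f_{k+1}$ maintain the second-order accuracy of the method, as noted in Section 2, so they do not degrade this residual order. The regularity hypothesis $\w_k\in W^{4,\infty}(\Omega)$ is what makes the residual genuinely $\OO\l(\tau_k^3\r)$ with a constant independent of $h$.

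Setting $\e_k = \w_k - \vv_k$ and subtracting the two versions of \R{factorednonlinear}, I would obtain an error identity of exactly the form \R{a3}, except for an additional additive $\OO\l(\tau_k^3\r)$ forcing term inherited from the truncation residual. As in the proof of Theorem 3.3 I would then linearize the maps $g$, $h$, and $\f$ by the mean value theorem, introducing intermediate points $\xx_k^{(i)}$ lying on the segment $\mathcal{L}(\vv_k;\w_k)$. This is exactly where the hypothesis is stated as \R{cfl3}, with the maximum taken over $\w_k$ rather than over $\tilde{\vv}_k$: by the reasoning of Remark 3.3 it guarantees that every such intermediate point still satisfies the CFL/positivity bound, so that Lemmas 3.5 and 3.6 control $PD(\xx_k^{(i)})$, $RD(\xx_k^{(i)})$, and $PD(R\xx_k^{(i)})$, while Lemma 3.4 controls the remaining constituent factors.

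With those bounds in place, the amplification estimate proceeds verbatim as in \R{a7}--\R{a10}, now producing a recursion of the form
\bb{convrec}
\|\e_{k+1}\| ~~\le~~ \l(1 + c_k\tau_k\r)\|\e_k\| + \tilde{C}\tau_k^3,
\ee
where $c_k$ and $\tilde{C}$ are positive constants independent of $h$, $\tau_k$, and $k$. I would then close the argument with a discrete Gr\"onwall estimate. Iterating \R{convrec} from the exact initial data $\e_0 = 0$ yields
$$\|\e_{k+1}\| ~~\le~~ \tilde{C}\l[\prod_{i=0}^k\l(1+c_i\tau_i\r)\r]\sum_{i=0}^k\tau_i^3 ~~\le~~ \tilde{C}\l(1+CT\r)\l(\max_{0\le i\le k}\tau_i\r)^2\sum_{i=0}^k\tau_i,$$
and since $\sum_{i=0}^k\tau_i\le T<\infty$ on the finite interval $[0,T]$, the global error is $\OO\l(\l(\max_i\tau_i\r)^2\r)$, which tends to zero as the temporal step sizes vanish. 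This establishes convergence, and in fact second-order convergence in time.

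I expect the only genuine obstacle to be the verification of consistency, namely confirming that substituting $\w_k$ into the factored scheme leaves an $\OO\l(\tau_k^3\r)$ residual once the Euler approximations of $D_{k+1}$ and $\f_{k+1}$ are incorporated; this is where the full $W^{4,\infty}$ regularity is needed, since the fourth spatial differences entering through Lemma 3.6 and the mixed space-time terms must all be bounded uniformly in $h$. Once consistency is secured, the stability portion and the Gr\"onwall summation are routine adaptations of Theorem 3.3.
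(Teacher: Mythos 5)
Your overall strategy is exactly the paper's: subtract the exact-solution version of \R{factorednonlinear} from the numerical one, linearize $g$, $h$, and $\f$ by the mean value theorem along $\mathcal{L}(\vv_k;\w_k)$, control the amplification factors with Lemmas 3.4--3.6 (with \R{cfl3} guaranteeing that the intermediate points $\xx_k^{(i)}$ inherit the CFL bound), and close with a discrete Gr\"onwall argument from $\e_0=\boldsymbol{0}$. The genuine gap is in your consistency claim. Substituting the true PDE solution into \R{factorednonlinear} does \emph{not} leave a residual that is ``precisely $\OO\l(\tau_k^3\r)$'' with constants independent of $h$: the matrices $P$, $R$, $PD(\cdot)$, $RD(\cdot)$ approximate the spatial operator $\Delta(u+u^2)$ only to $\OO\l(h^2\r)$, so the exact solution of \R{nonlinearmodel} satisfies the semidiscretized system with an $\OO\l(h^2\r)$ defect, which over one step of length $\tau_k$ contributes an additional $\OO\l(\tau_kh^2\r)$ to the local residual. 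This is exactly why the paper's error equation carries the two terms $\OO\l(\tau_k^3\r)+\OO\l(\tau_kh^2\r)$, and it is also the true role of the hypothesis $\w_k\in W^{4,\infty}(\Omega)$: bounded fourth derivatives are what make the centered second differences of $u$ and $u^2$ second-order accurate in space (besides feeding Lemma 3.6). Your reading of $W^{4,\infty}$ as making the residual ``genuinely $\OO\l(\tau_k^3\r)$'' conflates the temporal and spatial truncation errors.

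Consequently your recursion must read
$$\|\e_{k+1}\| ~~\le~~ \l(1+c_k\tau_k\r)\|\e_k\| + c_{1,k}\tau_k^3 + c_{2,k}\tau_kh^2,$$
and the Gr\"onwall step then gives $\|\e_{k+1}\| \le C\sum_{i=0}^k\l(\tau_i^3+\tau_ih^2\r) \le C\l(T\tau^2+Th^2\r)$ with $\tau=\max_i\tau_i$, which vanishes only as \emph{both} $\tau\to 0$ and $h\to 0$; this is the paper's conclusion. Your stated conclusion --- a global error of $\OO\l(\l(\max_i\tau_i\r)^2\r)$ with constants independent of $h$ --- is false as written: for fixed $h$, letting $\tau\to 0$ drives $\vv_k$ to the solution of the semidiscrete ODE system, which differs from the grid restriction of the PDE solution by $\OO\l(h^2\r)$, so $\|\e_k\|$ cannot tend to zero. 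Apart from this omission the argument is sound; indeed your Gr\"onwall algebra is slightly cleaner than the paper's and recovers the second-order temporal rate $\tau^2$ that the paper's own intermediate inequality supports, even though its final display only records $T\tau$.
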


\begin{proof}

Let $\w_k$ be the true solution to \R{nonlinearmodel}. Once again, we note that if \R{cfl3} holds, then we have the results which follow from \R{cfl1} holding. Then, similar to Theorem 3.3, we have
%$$\Phi_k = \left(I - \frac{\tau_k}{2} P \right)\left(I - \frac{\tau_k}{2} R \right)\quad\mbox{and}\quad \Psi_k = \left(I + \frac{\tau_k}{2} P \right)\left(I + \frac{\tau_k}{2} R \right),$$
%then we have
\bbb
\Phi_k\left(I - \frac{\tau_k}{2} P D(\w_{k+1}) \right)\left(I - \frac{\tau_k}{2} R D(\w_{k+1}) \right)\w_{k+1} &=& \Psi_k\left(I + \frac{\tau_k}{2} P D(\w_{k}) \right)\left(I + \frac{\tau_k}{2} R D(\w_{k}) \right)\w_{k}\nnn\\
&&+\frac{\tau_k}{2}(\f_{k+1}+\f_k)+\OO\l(\tau^3_k\r)+\OO\l(\tau_kh^2\r),\nnn\\
\label{c1}
\eee
where $D(\w_k) = \mbox{diag}(\w_k).$ Letting $\e_k = \w_k - \vv_k$ and subtracting \R{factorednonlinear} from \R{c1}, we have
\bbb
\Phi_k\l(g(\w_{k+1})-g(\vv_{k+1})\r) = \Psi_k\l(h(\w_k)-h(\vv_k)\r) + \OO\l(\tau_k^3\r)+\OO\l(\tau_kh^2\r),\label{c2}
\eee
where $g$ and $h$ are defined as in Theorem 3.3.
%$$g(\w_{k+1}) = \l(I-\frac{\tau_k}{2}PD(\w_{k+1})\r)\l(I-\frac{\tau_k}{2}RD(\w_{k+1})\r)\w_{k+1},\quad h(\w_{k}) = \l(I+\frac{\tau_k}{2}PD(\w_{k})\r)\l(I+\frac{\tau_k}{2}RD(\w_{k})\r)\w_{k}.$$
Note that, for $\tau_k$ sufficiently small we have $g(\w_{k+1}) = g(\vv_{k+1}) + g_v(\xx_{k+1}^{(1)})\e_{k+1}$ for some $\xx_{k+1}^{(1)}\in\mathcal{L}(\w_{k+1};\vv_{k+1}),$ $h(\w_k) = h(\vv_k) + h_v(\xx_k^{(2)})\e_k$ for some $\xx_k^{(2)}\in\mathcal{L}(\w_k;\vv_k),$ and $\f(\w_k) = \f(\vv_k) + \f_v(\xx_k^{(3)})\e_k$ for some $\xx_k^{(3)}\in\mathcal{L}(\w_k;\vv_k).$ Thus, \R{c2} becomes
\bbb
%\Phi_kg_v(\xx_{k+1}^{(1)})\e_{k+1} & = & \Psi_kh_v(\xx_k^{(2)})\e_k + \OO\l(\tau_k^3\r)+\OO\l(\tau_kh^2\r)\nnn\\
\e_{k+1} & = & \l(g_v(\xx_{k+1}^{(1)})\r)^{-1}\Phi_k^{-1}\l(I-\frac{\tau_k}{2}\l(g_v(\xx_{k+1}^{(1)})\r)^{-1}\Phi_k^{-1}\f_v(\xx_{k+1}^{(3)})\r)^{-1}\l(\Psi_kh_v(\xx_k^{(2)})\r)\nnn\\
&&~~~~~\times\l(I+\frac{\tau_k}{2}\l(h_v(\xx_k^{(2)})\r)^{-1}\Psi_k^{-1}\f_v(\xx_k^{(3)})\r)\e_k + \OO\l(\tau_k^3\r) + \OO\l(\tau_kh^2\r)\label{c3}
\eee
by similar arguments as those used in Theorem 3.3 to obtain \R{a6}. Taking the norm of both sides of \R{c3} gives
\bbb
\|\e_{k+1}\| & \le & \l(1+c_k\tau_k\r)\|\e_k\| + c_{1,k}\tau_k^3 + c_{2,k}\tau_kh^2,\label{c4}
%\l\|\l(g_v(\xx_{k+1}^{(1)})\r)^{-1}\r\|\l\|\Phi_k^{-1}\Psi_k\r\|\l\|h_v(\xx_k^{(2)})\r\|\|\e_k\| + c_{1,k}\tau_k^3 + c_{2,k}\tau_kh^2\nnn\\
%& \le & \l\|\l(g_v(\xx_{k+1}^{(1)})\r)^{-1} \r\|\l(1+c_{3,k} \tau_k^2\r) \l\|h_v(\xx_k^{(2)})\r\|\|\e_k\| + c_{1,k}\tau_k^3 + c_{2,k}\tau_kh^2,\label{c4}
\eee
where $c_{1,k},~c_{2,k},~c_k$ are positive constants independent of $h,~\tau_k,~k,$
%So it remains to show bounds for $\l\|\l(g_v(\xx_{k+1}^{(1)})\r)^{-1}\r\|$ and $\l\|h_v(\xx_k^{(2)})\r\|.$ By carefully considering the matrix-vector product and then differentiating, we observe that
%\bbb
%g_v(\xx_{k+1}^{(1)}) & = & \frac{\partial}{\partial \vv}\l(I-\frac{\tau_k}{2}PD(\vv)\r)\l(I-\frac{\tau_k}{2}RD(\vv)\r)\vv\bigg|_{\vv=\xx_{k+1}^{(1)}}\nnn\\
%& = & \frac{\partial}{\partial \vv}\l(I - \frac{\tau_k}{2}PD(\vv) - \frac{\tau_k}{2}RD(\vv) + \frac{\tau_k^2}{4}PD(\vv)RD(\vv)\r)\vv\bigg|_{\vv=\xx_{k+1}}\nnn\\
%& = & I - \tau_kPD(\xx_{k+1}^{(1)}) - \tau_kRD(\xx_{k+1}^{(1)}) + \frac{\tau_k^2}{2}PD(\xx_{k+1}^{(1)})RD(\xx_{k+1}) + \frac{\tau_k^2}{4}PD(R\xx_{k+1}^{(1)})\nnn\\
%& = & I -\tau_k\l(PD(\xx_{k+1})+RD(\xx_{k+1}) - \frac{\tau_k}{2}PD(\xx_{k+1})RD(\xx_{k+1}) - \frac{\tau_k}{4}PD(R\xx_{k+1})\r)\nnn\\
%& = & \l(I-\tau_kPD(\xx_{k+1}^{(1)})\r)\l(I-\tau_kRD(\xx_{k+1}^{(1)})\r)\l(I+\frac{\tau_k^2}{2}PD(\xx_{k+1}^{(1)})RD(\xx_{k+1}^{(1)})\r)\l(I+\frac{\tau_k^2}{4}PD(R\xx_{k+1}^{(1)})\r)+\OO\l(\tau_k^2\r)\nnn\\
%& = & \l(I-\tau_kPD(\xx_{k+1}^{(1)})\r)\l(I-\tau_kRD(\xx_{k+1}^{(1)})\r) + \OO\l(\tau_k^2\r),\label{c5}
%& = & I - \tau_kX(\tau_k)\label{c5}
%\eee
for $\tau_k$ sufficiently small and we have appealed to the fact that $\|PD(\xx_{k+1}^{(1)})\|,\|RD(\xx_{k+1}^{(1)})\| < \infty$ by Lemma 3.5. and $\|PD(R\xx_{k+1}^{(1)})\|<\infty$ by Lemma 3.6, combined with our assumptions that $\w_k\in W^{4,\infty}$ for $k\ge 0.$
Considering \R{c4} recursively gives
\bbb
\|\e_{k+1}\|
%\l(1+c_k\tau_k\r)\l(1+c_{k-1}\tau_{k-1}\r)\|\e_{k-1}\| + c_k\l(\tau_k^3+\tau_k^2h\r)+c_{k-1}\l(\tau_{k-1}^3+\tau_{k-1}h^2\r)\nnn\\
& \le & \prod_{i=0}^k \l(1+c_i\tau_i\r)\|\e_0\| + \sum_{i=0}^k c_i\l(\tau_i^3+\tau_ih^2\r) ~~\le~~ \l(1 + C\sum_{i=0}^k\tau_i\r)\|\e_0\| + C\sum_{i=0}^k\l(\tau_i^3+\tau_ih^2\r)\nnn\\
%& \le & \l(1 + C\tau\sum_{i=0}^k\tau_i\r)\|\e_0\| + C\sum_{i=0}^k\l(\tau_i^3+\tau_ih^2\r)\nnn\\
& \le & \l(1+ CT\r)\|\e_0\| + C\sum_{i=0}^k\l(\tau_i^3+\tau_ih^2\r) ~~\le~~ C\sum_{i=0}^k\l(\tau_i^3+\tau_ih^2\r),\label{ccc1}
\eee
where $C$ is a positive constant independent of $h,~\tau_k,~\mbox{and}~ k,$ $\sum_{i=0}^k\tau_i \le T,$ and we have used the fact that $\e_0 = \boldsymbol{0}.$ Also note that
$$\sum_{i=0}^k \tau_i^3 \le \tau^2\sum_{i=0}^k\tau_i \le \tau T\quad\quad\mbox{and}\quad\quad \sum_{i=0}^k\tau_ih^2 = h^2\sum_{i=0}^k \tau_i \le Th^2,$$
where $\max_{i=0,1,\dots,k}\{\tau_i\} \le \tau.$
%$$\sum_{i=0}^k\tau_ih^2 = h^2\sum_{i=0}^k \tau_i \le Th^2.$$
Therefore,
$$\lim_{\tau,h\to 0}\|\e_{k+1}\| = \lim_{\tau,h\to 0} C\l(T\tau + Th^2\r) = 0,$$
which ensures the anticipated convergence.
\end{proof}

\section{\label{Sec:Experiments} Numerical Experiments}  \clearallnum

We provide numerical experiments that provide empirical evidence which suggests our nonlinear operator splitting method is stable, convergent, and efficient.  The first two examples focus exclusively on simulations of \R{nonlinearmodel}.  The first example examines the numerical convergence rate and computational efficiency, in light of a known exact solution.  The second example examines the numerical solution in the case of no reactive term.  In this latter case no theoretical solution is known, however, the numerical solution satisfies our energy estimates and converges at the anticipated rate.  In our last two examples, the numerical procedure is used to examine the effect of self-diffusion on the nonlinear food chain model in \R{eq:(1.1)}-\R{eq:(1.3)}.

The computations are carried out on a Matlab\textsuperscript{\textregistered} platform and its parallel computing toolbox on a HP EliteDesk 800 G1 work station with an Intel\textsuperscript{\textregistered} Core(TM) i7-4770 3.40GHz processor with 16 GB of RAM.

\subsection{Nonlinear Model - Example 1}

Consider \R{nonlinearmodel} with
$$f(x,y,t) = -2 \pi^2 \l[ \l(\cos(\pi x) \sin(\pi y)\r)^2 + \l(\cos(\pi y) \sin(\pi x)\r)^2 - 2 \l(\sin(\pi x) \sin(\pi y)\r)^2 \r] \exp\l(-4 \pi^2 t\r)$$
This reactive term has been chosen so that an exact solution to the partial differential equation is known, that is,
$$ u(x,y,t) = \sin(\pi x) \sin(\pi y) \exp(-2 \pi^2 t).$$
Let $u_{i,j}^{(\tau)}$ be the numerical solution at $x_i=i h$, $y_j= j h$, and at time $T$ with $\tau$ as the temporal size.  For a fixed $h$ we have $|u_{i,j}^{(\tau)} - u| \approx C \tau^p$ for which $p$ is the order of accuracy that can be estimated as
\[ p \approx \frac{1}{\ln 2} \frac{1}{N^2} \sum_{i,j=1}^N \ln \frac{\l| u_{i,j}^{(\tau)} - u(x_i,y_j,T)\r|}{\l| u_{i,j}^{(\tau/2)} - u(x_i,y_j,T)\r|}. \]
Let $h=.01$ and based on \R{cfl} let $\tau = \frac12 10^{-4}$.  Let $T=1$. We estimate the approximate order of
$$p\approx 1.998841.$$
This indicates that our nonlinear splitting algorithm is converging at the anticipated second order rate.

Since the exact solution decays exponentially the temporal step is constant throughout the entire computation.  Clearly, the maximum of the theoretical solution occurs at $(\frac12,\frac12)$.  Hence, we calculate the natural logarithm of the numerical and exact solution over time at the maximum location. Therefore, we expect to see a linear function with a slope of $-2 \pi^2$. In Figure \ref{Example1Plot}(a) we see that the numerical solution decays at nearly the identical rate as the exact solution; the two curves are virtually indistinguishable.  We use a linear least squares to estimate the slope of the numerical solution decay and determine a value of $-19.7392088$.  In Figure \ref{Example1Plot}(b) the absolute difference is shown to aid in the comparison. Notice that the slope of \ref{Example1Plot}(b) is $2\pi^2 - 19.7392088\approx .006.$
%\[ \log |u_{50,50,k} - u(.5,.5,t_k)| \]

\begin{figure}[h]
\begin{center}
\includegraphics[scale=.5]{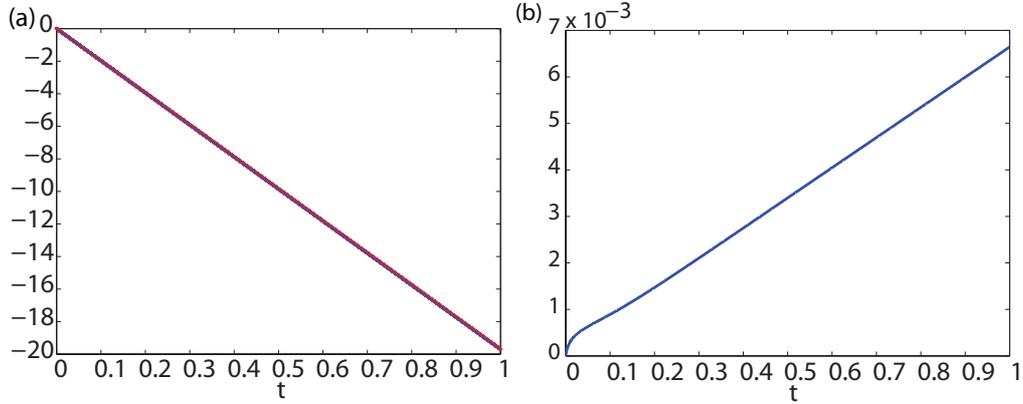}
\caption{The maximum value of the numerical and theoretical solution both occur at the center of the domain $(.5,.5)$. (a) A plot of the natural logarithm of $u(.5,.5,t_k)$ (blue) and $u_{50,50,k}$ (red) are shown.  The slope is calculated through a linear least squares and is approximately $-19.7392088,$ which is close to the anticipated exponential decay rate of $-2\pi^2\approx -19.7392088$. (b) An absolute difference of the values shown in (a).  The slope of the difference of the logs is $.006$. This shows strong agreement between the theoretical and numerical solutions. Parameters used: $h=.01$, $\tau = 5\times 10^{-5}$.}
\label{Example1Plot}
\end{center}
\end{figure}

We shall examine the computationally efficiency of the algorithm as we increase the number of unknowns, $N$, that is, as $h$ decreases in size.  Let $\tau = 10^{-6}$ and consider the computational time for $1000$ temporal steps for $N=21,31, \ldots, 401$.  The computational time, in seconds, is determine for increasing $N$. Figure \ref{Example1CompTime} shows a log-log plot of the computational time versus $N$.  A linear least squares approximation is used to determine the slope of the line and is found to be $1.654628$. Hence, the computational time scales as $N^{1.654628}$, which shows the proposed nonlinear splitting method is computationally efficient.

\begin{figure}[h]
\begin{center}
\includegraphics[scale=.4]{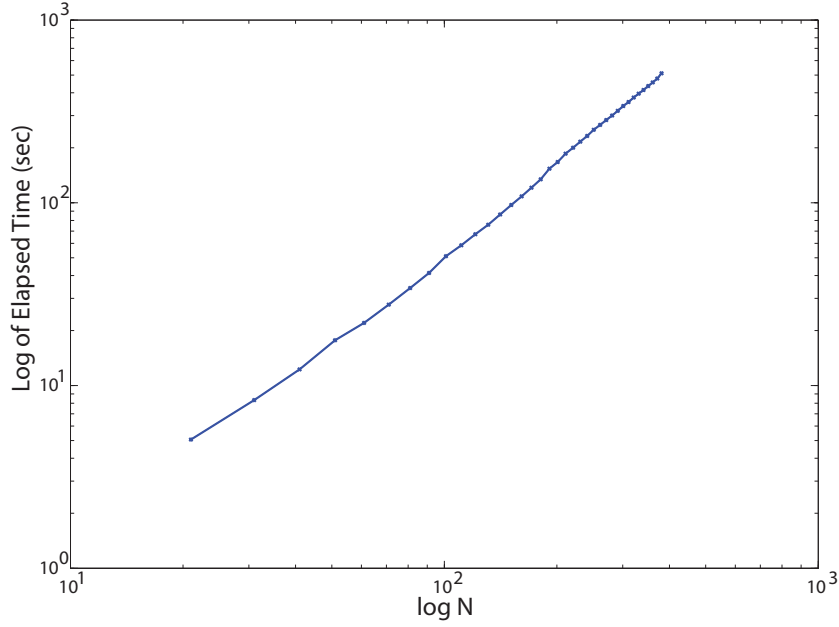}
\caption{A log-log plot of the computational time, in seconds, versus $N$ after $1000$ iterations.  The temporal step is held constant, $\tau = 10^{-6}$, while $h=1/(N-1)$.  A linear least squares approximates the slope of the line to be $1.654628$. This indicates that the computational time is scales as $N^{1.654628}$.  The computational time of an efficient scheme should scale no slower than $N^2$.  This indicates that the proposed nonlinear splitting scheme is highly efficient.}
\label{Example1CompTime}
\end{center}
\end{figure}

\subsection{Nonlinear Model - Example 2}

Consider \R{nonlinearmodel} with $f(x,y,t)=0$.  Let
$$ E(t) = \| u \|_2^2 \equiv  \int_{\Omega} | u(x,y,t) |^2 d\Omega.$$
Multiplying \R{nonlinearmodel} by $u$ and integrating over the spatial domain yields, after using the divergence theorem,
$$ E'(t) = 2 \int_{\Omega} (u + u^2)\Delta u d\Omega.$$
If we assume Dirichlet boundary conditions and then integrate by parts, we obtain
$$ \int_{\Omega} u^2 \Delta u d\Omega = - 2 \int u | \nabla u |^2 d\Omega  $$
Hence,
$$ E'(t) +  2 \int_{\Omega} |\nabla u|^2 d\Omega \leq 0.$$
By the Poincar\'{e} inequality we have,
$$ \frac{1}{C} \| u \|_2^2 \leq \int_{\Omega} |\nabla u|^2 d\Omega $$
for a constant $C$.  Therefore,
$$ E'(t) +  \frac{2}{C}E(t) \leq 0.$$
Hence, we have an upper bound for the energy norm for $u$, that is,
$$ E(t) \leq \exp\l(-\frac{2}{C}t\r)E(0).$$
Let $h=.01$, $\tau=10^{-5}$, and $T=5$.  Let the initial condition be $u(x,y,0)=\sin(\pi x) \sin(\pi y).$ We determine the numerical solution and determine the energy norm over the duration of the computation.  In Figure \ref{Example2LogEnergy} we plot the natural logarithm of energy norm and clearly see that the numerical solution decays at a rate of approximately $-19.7397958.$ This further validates the effectiveness of the numerical approximation technique.

\begin{figure}[h]
\begin{center}
\includegraphics[scale=.4]{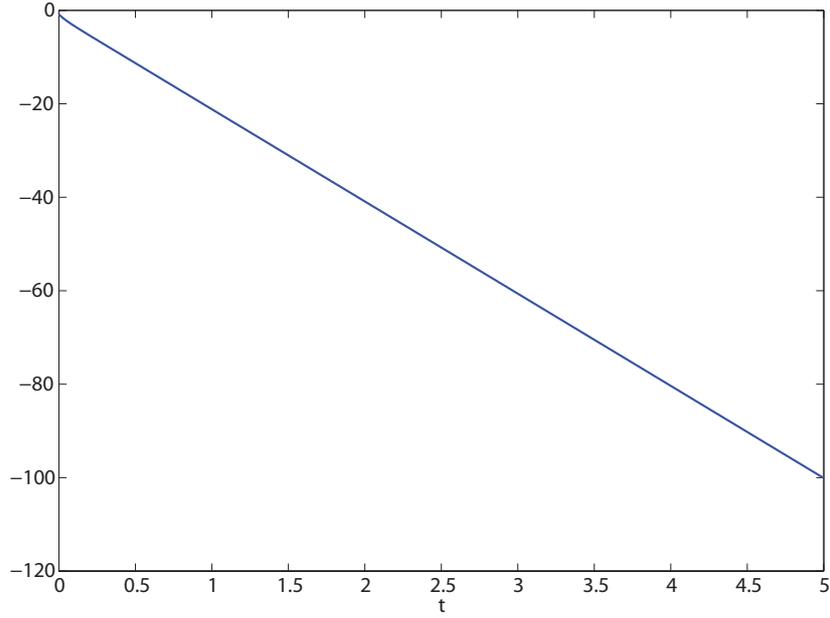}
\caption{A plot of the natural logarithm of the energy norm of $v(x,y,t)$ throughout the duration of the computation.  The slope is determined to be $-19.7397958$ using a linear least squares approximation.  This agrees with the anticipated upper bound and suggests a value of $C \approx -.101318$. The temporal step is held constant, $\tau = 10^{-6}$, while $h=.01$.}
\label{Example2LogEnergy}
\end{center}
\end{figure}

Consider the same initial condition and spatial step size.  We approximate the temporal convergence rate in a similar fashion to the previous example, that is,
\[ p \approx \frac{1}{\ln 2} \frac{1}{N^2} \sum_{i,j=1}^N \ln \frac{\l| u_{i,j}^{(\tau)} - u_{i,j}^{(\tau_f)}\r|}{\l| u_{i,j}^{(\tau/2)} - u_{i,j}^{(\tau_f)}\r|}, \]
where $\tau=\frac12 10^{-4}$ and $u_{i,j}^{(\tau_f)}$ is the numerical approximation using a fine temporal step size, $\tau_f = 10^{-12}$. We find that
$$ p \approx  1.994504,$$
which is at the predicated second order convergence rate.

\subsection{Three-Species Food Chain Model}

The nonlinear food-chain model is approximated through our nonlinear operator splitting scheme. Recall Eqs.~\ref{eq:(1.1)}-\ref{eq:(1.3)}, namely,

\begin{eqnarray*}
\partial _{t}r &=& d_3 \Delta r + d_4 \Delta r^2 + cr^{2}-w _{3}\frac{r^{2}}{v+D_{3}} \\
\partial _{t}v &=& d_{2}\Delta v-a_{2}v+w_{1}\frac{uv}{u+D_{1}}-w_{2}\frac{vr}{v+D_{2}} \\
\partial _{t}u &=& d_{1}\Delta u+a_{1}u-b_{2}u^{2}-w_{0}\frac{uv}{u+D_{0}}.
\end{eqnarray*}
The partial differential equation for $r$ is determined through our nonlinear splitting scheme, while the other partial differential equations are determined through a second order accurate Peaceman-Rachford splitting scheme \cite{Beauregard2013}. For clarity, the numerical procedure is detailed here:

\begin{subequations}
\begin{eqnarray*}
%\mbox{STEP ONE~~~~}
   \l(I - \frac{\tau_k}{2} P\r)\tilde{\textbf{r}}^{(1)} &=& \left(I + \frac{\tau_k}{2} \left(P + 2R + 2PD_k + 2RD_k\right)\right) \textbf{r}_k + \tau_k \textbf{h}_k, \\%{\red + \frac{\tau_k}{2}(f_{k+1}+f_k)}, \\
\l(I - \frac{\tau_k}{2} P\r)\tilde{\textbf{v}}^{(1)} &=& \left(I + \frac{\tau_k}{2} P\right) \textbf{v}_k + \frac12 \tau_k \textbf{g}_k \\
\l(I - \frac{\tau_k}{2} P\r)\tilde{\textbf{u}}^{(1)} &=& \left(I + \frac{\tau_k}{2} P\right) \textbf{u}_k + \frac12 \tau_k \textbf{f}_k \\
%\mbox{STEP TWO~~~~}
 \l(I - \frac{\tau_k}{2} R\r)\tilde{\textbf{r}}^{(2)} &=& \tilde{\textbf{r}}^{(1)} - \frac{\tau_k}{2} R \textbf{r}_k,\\
\l(I - \frac{\tau_k}{2} R\r)\tilde{\textbf{v}}_{k+1} &=& \left(I + \frac{\tau_k}{2} R\right) \textbf{v}^{(1)} + \frac12 \tau_k \textbf{g}_{k+1} \\
\l(I - \frac{\tau_k}{2} R\r)\tilde{\textbf{u}}_{k+1} &=& \left(I + \frac{\tau_k}{2} R\right) \textbf{u}^{(1)} + \frac12 \tau_k \textbf{f}_{k+1} \\
%\mbox{STEP THREE~~}
 \l(I - \frac{\tau_k}{2} PD_{k+1}\r)\tilde{\textbf{r}}^{(3)} &=& \tilde{\textbf{r}}^{(2)} - \frac{\tau_k}{2} P D_k \textbf{r}_k,\\
%\mbox{STEP FOUR~~}
 \l(I - \frac{\tau_k}{2} RD_{k+1}\r)\textbf{r}_{k+1} &=& \tilde{\textbf{r}}^{(3)} - \frac{\tau_k}{2} R D_k \textbf{r}_k + \frac{\tau_k}{2}\left( \textbf{h}_{k+1} - \textbf{h}_k\right),
\end{eqnarray*}
\end{subequations}
where $\textbf{r}_k = \left(r_{11}^k, r_{21}^k, \ldots, r_{NN}^k\right)^{\top},~ \textbf{v}_k = \left(v_{11}^k, v_{21}^k, \ldots, v_{NN}^k\right)^{\top},$ and $\textbf{u}_k = \left(u_{11}^k, u_{21}^k, \ldots, u_{NN}^k\right)^{\top}$.

\subsubsection{Example 3}

 As in the previous example, the convergence rate is estimated.  Let $u(x,y,0)=v(x,y,0)=r(x,y,0)=\sin(\pi x) \sin(\pi y)$, $d_1=d_2=d_3=d_4=a_1=a_2=w_1=w_2=w_3=1$, $D_0=D_1=D_2=D_3=10$, and $c = .2$.  Let $h=.01$ and $\tau$ is fixed at $10^{-4}$.  We determine the convergence rate, $p$, is approximately $2.001871$, $1.991905$, and $1.994640$ for the numerical solutions for $u$, $v$, and $r$, respectively.  This indicates the numerical solution is converging at the anticipated second order rate.

As in example 1, we estimate the computational efficiency by determining the computational time to complete $1000$ temporal steps for $N=21,31, \ldots, 401$.  The computational time, in seconds, is determined for increasing $N$. It is determined that the computational time scales as $N^{1.736}$, which indicates our method is computationally efficient.

\subsubsection{Example 4}

Consider the case when $d_4=0$.  The population $r$ may blow-up in finite time, that is, $\displaystyle \lim_{t\rightarrow T^-<\infty} r(x,y,t) \rightarrow \infty$.  Notice the coefficient on $r^2$ is $c - w_3/(v + D_3)$.  If $v=0$ and $c - w_3/D_3 <0$ it is known that $r(x,y,t)$ will decay to zero.  However, if $v\neq 0$ then it has been shown that $r$ may still blow-up in finite time even if $c - w_3/D_3 <0$ for Neumann boundary conditions \cite{AA02}.  In the case of Dirichlet boundary conditions it has been conjectured that for a fixed parameter set and initial conditions there will exists a critical $c^*$ for which $\forall c>c^*$ the solution will blow-up in finite time.  We provide empirical evidence that supports this conjecture for the parameters shown in Table \ref{tab:table2}.  The initial conditions are
\[ v(x,y,0)=r(x,y,0)=100 \sin(\pi x) \sin(\pi y),  ~~ u(x,y,0) = .1 v(x,y,0) \]
Let $h=.01$ and $\tau_0=5\times 10^{-6}$.  The minimum stepsize is set to be $10^{-10}$.  In the case of $d_4=0$ the critical $c^*\approx 5.07$.  For $d_4=.025$ the $c^*\approx 5.25$. Hence, we see that for $d_4\neq 0$ the critical value of $c^*$ is larger, due to the fact the nonlinearity $(\Delta r^2)$ further dampens the population growth of $r$.  %Notice that there is an immediate jump in the value of $c^*$ when $d_4>0$. Hence, the effect of overcrowding term has an immediate and dramatic impact graph of $c^*(d_4)$ suggests a power law of the form BLAH.  Of course, the coefficients will change with the consideration of a new parameter set, however experiments suggest that $c^*(d_4)$ will decrease as $d_4$ increases and, most importantly, $c^*$ decreases no faster than $x^{-p}$ for some $p>0.$

\begin{table}[h!]
  \begin{center}
    \caption{List of parameters used in Example 4.}
    \label{tab:table2}
    \begin{tabular}{llll}\hline
    %{*{2}{c}}
$a_1 = 5.0$ \quad \quad \quad \quad & $a_2 = 0.75$ \quad \quad \quad \quad & $w_{0} = 0.55$ \quad \quad \quad \quad & $w_{1} = 1.0$ \\
$w_{2} =0.25$\ \quad \quad \quad \quad  & $w_{3} = 1.2$  \quad \quad \quad \quad  &$b_{2} = 0.5$  \quad \quad \quad \quad &$D_0 = 20.0$  \\
$D_1= 13.0$  \quad \quad \quad \quad & $D_2 = 10.0$ \quad \quad \quad \quad & $D_3 = 20.0$ \quad \quad \quad \quad & \\
$d_1 = 0.1$\quad \quad \quad \quad & $d_2 = 0.1$\quad \quad \quad \quad &$d_3=0.1$\\ \hline
    \end{tabular}
  \end{center}
\end{table}

%\begin{table}[h!]
%  \begin{center}
%    \caption{$c^*$ as a function of $d_4.$}
%    \label{tab:table2}
%    \begin{tabular}{|c|c|c|c|c|c|}\hline
%$d_4$ & 0.00 & .025 & .050 & .075 & .100 \\ \hline
%$c^*$ & 5.07 & 5.28 &      &      &  \\ \hline % If c > c* then blow up occurs in r
%    \end{tabular}
%  \end{center}
%\end{table}

%\begin{figure}[h]
%\begin{center}
%\includegraphics[scale=.4]{Example4CriticalC.eps}
%\label{Example2LogEnergy}
%\caption{A plot of the critical value of $c$ as a function of $d_4$.  An adaptive temporal step is used with an initial step of $\tau_0 = 5 \times 10^{-6}$. $h=.01$.}
%\end{center}
%\end{figure}

\section{\label{Sec:Conclusions} Conclusions}  \clearallnum

In this paper a new adaptive nonlinear operator splitting scheme was developed and analyzed to solve reaction diffusion equations with a nonlinear self-diffusion term.  Under minimal criteria we are able to shown convergence and stability of the proposed method.  The numerical experiments further validate these results. The method is also computationally efficient and experiments suggest the computational time to completion scales less than $N^2$, which is the total number of unknowns.  The efficiency of the algorithm is a notable accomplishment of the splitting design while the numerical analysis provides confidence in the utility of the numerical algorithm in applications, in particular nonlinear food chain models.  In particular, our examples indicated that our method is computationally efficient and accurate for a complicated and nonlinear food chain models. 

\section*{Acknowledgements}

The first author would like to explicitly express his gratitude for an internal research grant (No. 107552-26423-150) from Stephen F. Austin State University.

\section*{References}

%\bibliography{JCAM_Quenching}%{Quenching4}
%\bibliography{quenchingbibF}

\end{document}